\documentclass{article}


\usepackage[nonatbib,preprint]{neurips_2021}




\usepackage[utf8]{inputenc} 
\usepackage[T1]{fontenc}    
\usepackage{hyperref}       
\usepackage{url}            
\usepackage{booktabs}       
\usepackage{amsfonts}       
\usepackage{nicefrac}       
\usepackage{microtype}      
\usepackage{xcolor}         

\usepackage{amsmath}
\usepackage{amssymb}
\usepackage[ruled,lined,linesnumbered,vlined]{algorithm2e}
\usepackage{algorithmic}
\usepackage{mathrsfs}
\usepackage{graphicx}
\usepackage{subcaption}

\usepackage{ThmsLemmas}
\usepackage{bnl_style}

\newcommand{\col}[1]{#1}

\title{Stochastic Projective Splitting: Solving Saddle-Point Problems with Multiple Regularizers}

%

\author{%
  Patrick R. ~Johnstone\\
 Computational Science Initiative\\
 Brookhaven National Laboratory\\
patrick.r.johnstone@gmail.com
  \And
  Jonathan Eckstein \\
  Department of Management Science and Information Systems\\
  Rutgers University\\
  jeckstei@business.rutgers.edu
 \AND
  Thomas Flynn \\
 Computational Science Initiative\\
 Brookhaven National Laboratory\\
tflynn@bnl.gov
 \And
 Shinjae Yoo \\
 Computational Science Initiative\\
 Brookhaven National Laboratory\\
 sjyoo@bnl.gov
}


\begin{document}

\maketitle

\begin{abstract}
We present a new, stochastic variant of the projective splitting (PS) family
of algorithms for monotone inclusion problems.  It can solve min-max and
noncooperative game formulations arising in applications such as
robust ML without the convergence issues associated with gradient
descent-ascent, the current \emph{de facto} standard approach in such
situations.  Our proposal is the first version of PS able to use stochastic
(as opposed to deterministic) gradient oracles. It is also the first stochastic
method that can solve min-max games while easily handling multiple constraints
and nonsmooth regularizers via projection and proximal operators.  We close with numerical experiments on a
distributionally robust sparse logistic regression problem.
\end{abstract}

\section{Introduction}
Perhaps the most prominent application of optimization in ML is the empirical risk minimization problem.  However, inspired by the success of GANs~\cite{NIPS2014_5ca3e9b1},  ML practictioners have developed more complicated min-max and adversarial optimization formulations
\cite{yu2021fast,kuhn2019wasserstein,shafieezadeh2015distributionally,sinha2018certifiable,
lin2020gradient,NIPS2016_4588e674, huang2017context,wadsworth2018achieving,zhang2018mitigating,edwards2015censoring,celis2019improved}. Solving these multi-player games leads to issues not seen when minimizing a single loss function.
The competitive nature of a game leads to rotational dynamics that can cause intuitive gradient-based methods to fail to converge \cite{gidel2018a,daskalakis2018training,NEURIPS2020_ba9a56ce}.

A mathematical framework underlying both convex optimization and \col{saddle-point problems} is the \textit{monotone inclusion problem} (See \cite{ryu2016primer} for an introduction).
Methods developed for monotone inclusions will converge for \col{convex-concave} games as they are explicitly designed to handle such problems' governing dynamics.
Nevertheless,  monotone inclusion methods and theory are not well known in the ML community,
although there has been recent interest in monotone variational inequalities, which form a special case of monotone inclusions \cite{antonakopoulos2019adaptive,gidel2018a,daskalakis2018training,NEURIPS2020_ba9a56ce,
mertikopoulos2018optimistic}.

The most prevalent methods for solving min-max games in ML are
variants of \textit{gradient descent-ascent} (GDA).
This method alternates between a gradient-descent step for the minimizing player and a gradient-ascent step for the maximizing player.
Unfortunately, GDA requires additional assumptions to converge on convex-concave games, and it even fails for some simple 2D bilinear games \cite[Prop.~1]{gidel2018a}.
While there have been several approaches to modify either GDA \cite{chavdarova2021taming,grnarova2021generative,balduzzi2018mechanics} or the underlying game objective \cite{mescheder2018training,NIPS2017_7e0a0209,NIPS2017_4588e674} to ensure convergence,
this paper instead develops a method for solving monotone inclusions that can naturally handle game dynamics.

Our approach builds upon the recently proposed projective splitting (PS)
method with forward steps~\cite{johnstone2020projective}.
PS is designed specifically for solving monotone inclusions, thus does not
fall prey to the convergence issues that plague GDA, at least for \col{convex-concave} games.
PS is within the general class of projective splitting methods invented in \cite{eckstein2008family} and developed further in \cite{eckstein2009general,alotaibi2014solving,combettes2016async,eckstein2017simplified,johnstone2018convergence,
johnstone2021single,johnstone2020only}.
These methods work by
creating a separating hyperplane between the current iterate and the solution
and then moving closer to the solution by projecting the current iterate onto this hyperplane (see Section \ref{secProjSplit} for an overview).
Other than being able to natively handle game dynamics,
the primary advantage of PS is that it \textit{fully splits} problems involving an arbitrary number of regularizers and constraints.
``Full splitting''
means that the method can handle multiple regularizers and constraints through
their respective individual proximal and projection operators,
along with the smooth terms via gradients.
What makes this useful is that many of the regularizers used in ML have proximal operators that are easy to compute \cite{parikh2013proximal}.

Despite these advantages, the preexisting PS framework has a significant
drawback: it requires deterministic gradient oracles. This feature
makes it impractical for application to large datasets for which
stochastic oracles may be the only feasible option.


\paragraph{Contributions}
The primary contribution of this work is a new projective splitting algorithm that allows for a stochastic gradient oracle. We call the method \textit{stochastic projective splitting} (SPS).  It is the first stochastic method to fully split the monotone inclusion problem
\begin{align}\label{mono1}
\text{Find }z\in\rR^d
\,\,\text{ s.t. }\,\,
0 \in \sumin A_i(z) + B(z)
\end{align}
where $B$ is monotone and $L$-Lipschitz and each $A_i$ is maximal monotone and
typically set valued, usually arising from a constraint or a nonsmooth
regularizer in the underlying optimization problem or game (see for example
\cite{ryu2016primer} for definitions). It interrogates the Lipschitz operator $B$
through a stochastic oracle. Previous methods splitting this inclusion have
either required a deterministic oracle for $B$, or have made far more
restrictive assumptions on the noise or the operators \cite{briceno2011monotone+,combettes2012primal,malitsky2020forward,bot2019forward,van2021convergence}.  Our proposal is the
first stochastic method that can solve min-max problems under reasonable
assumptions, while easily handling multiple regularizers and constraints.

When moving away from a deterministic gradient oracle in projective splitting, a key difficulty is that the generated hyperplanes do not guarantee separation between the solution and the current point. We solve this issue by relaxing the projection: we only update each iterate in the \textit{direction} of the noisy projection and scale its movement by a decreasing stepsize that allows for control of the stochastic error.
Using the framework of \textit{stochastic quasi-Fej\'{e}r monotonicity} \cite{combettes2015stochastic}, we prove almost-sure convergence of the final iterate and do not require averaging of the iterates (Theorem \ref{thmMain}, Section \ref{secMainResults}). We also provide a non-asymptotic convergence rate for the approximation residual (Theorem \ref{thmConvR}, Section \ref{secMainResults}).

A special case of SPS is the recently-developed Double Stepsize Extragradient Method (DSEG) \cite{NEURIPS2020_ba9a56ce}.  When
only $B$ is present in \eqref{mono1},  DSEG and SPS coincide. Thus, our method extends DSEG to allow for regularizers and constraints.
Our analysis also provides a new interpretation for DSEG as a special case of projective splitting.
Our nonasymptotic convergence rate for SPS also applies to DSEG under no additional assumptions.  In contrast, the original convergence rate analysis for DSEG requires either strong monotonicity or an error bound.

We close with numerical experiments on a distributionally robust sparse logistic regression problem. This is a nonsmooth convex-concave min-max problem which can be converted to \eqref{mono1} with $n=2$ set-valued operators. Owing to its ability to use a stochastic oracle, SPS performs quite well  compared with deterministic splitting methods.

\paragraph{Non-monotone problems} The work \cite{NEURIPS2020_ba9a56ce} included a local convergence analysis for DSEG applied to locally monotone problems. For min-max problems, if the objective is  locally convex-concave at a solution and DSEG is initialized in close proximity, then for small enough stepsizes it converges to the solution with high probability.  It is possible to extend this result to SPS, along with our convergence rate analysis.  This result is beyond the scope of this work, but the appendix provides a proof sketch.

\section{Background on Monotone Inclusions}
\label{secBackG}
Since they are so important to SPS, this section provides some background
material regarding monotone inclusions, along with their connections to convex
optimization, games, and ML.  The appendix discusses their connections to
variational inequalities.   For a more thorough treatment, we refer to
\cite{bauschke2011convex}.

\paragraph{Fundamentals}
Let $f:\rR^d\to\rR\cup\{\infty\}$ be closed, convex, and proper (CCP).  Recall that
its \emph{subdifferential} $\partial f$ is given by
$
\partial f(x) \triangleq \{g:f(y)\geq f(x)+g^\top (y-x)\}.
$
The map $\partial f$ has the property
\begin{align*}
u\in \partial f(x),v\in \partial f(y)\implies (u - v)^\top(x - y) \geq 0,
\end{align*}
and any point-to-set map having this property is called a \emph{monotone
operator}. A minimizer of $f$ is any $x^*$ such that $0\in\partial f(x^*)$.
This is perhaps the simplest example of a \textit{monotone inclusion}, the
problem of finding $x$ such that $0 \in T(x)$, where $T$ is a monotone
operator. If $f$ is smooth, then $\partial f(x) = \{\nabla f(x)\}$ for all $x$, and
the monotone inclusion $0\in\partial f(x)$ is equivalent
to the first-order optimality condition $0 = \nabla f(x)$.


Next, suppose that we wish to minimize the sum of two CCP functions $f,g:\rR^d\to\rR\cup\{\infty\}$. Since under certain regularity conditions (\cite[Thm.~16.47]{bauschke2011convex}) it holds that
$
\partial (f+g) = \partial f + \partial g,
$
minimizing $f + g$ may be accomplished by solving the monotone inclusion
$
0\in\partial f(x) + \partial g(x).
$
The ``+" here denotes the Minkowski sum (also known as the \emph{dilation},
the set formed by collecting the sums of all pairs of points from the two sets); sums of monotone operators formed in this way are also monotone.
Constrained problems of the form $\min_{x\in\cC} f(x)$ for a
closed convex set $\cC$ are equivalent to the above formulation with $g(x) =
\iota_{\cC}(x)$, where  $\iota_{\cC}(x)$ denotes the \textit{indicator
function} returning $0$ when $x\in\cC$ and $+\infty$ otherwise. The
subdifferential of the indicator function, $\partial\iota_{\cC}$, is known as
the \emph{normal cone map} and written as $N_{\cC}$. For closed convex sets,
the normal cone map is a maximal~\cite[Def.~20.20]{bauschke2011convex}
monotone operator~\cite[Example 20.26]{bauschke2011convex}.


Under certain regularity conditions~ \cite[Cor.~16.5] {bauschke2011convex}, minimizing a sum of CCP
functions $f_1,\ldots,f_n$ is equivalent to solving the monotone inclusion
formed from the sum of their subdifferentials:
\begin{align*}
x^*\in \underset{x\in\rR^d}{\arg\min} \sumin f_i(x)
\iff
0 \in\sumin \partial f_i(x^*).
\end{align*}
Multiple constraints of the form $x \in \cap_{i=1}^c \cC_i$, where each
set $\cC_i \subseteq \rR^d$ is closed and convex, may be imposed by adding a
sum of indicator functions $\sum_{i=1}^c\iota_{\cC_i}$ to the objective.  Under standard
regularity conditions \cite[Cor.~16.5]{bauschke2011convex}), we thus have
\begin{align}\label{eqMultiReg}
x^*\in\underset{x \in \left(\bigcap_{i=1}^c \cC_i\right)}{\arg\min} \sumin f(x)
\iff
0 \in\sumin\partial f_i(x^*) + \sum_{j=1}^c N_{\cC_j}(x^*).
\end{align}

\paragraph{ML applications}
The form~\eqref{eqMultiReg} can be used to model ML problems with multiple
constraints and/or nonsmooth regularizers, including
sparse and overlapping group lasso \cite{jacob2009group},
sparse and low-rank matrix estimation problems \cite{savalle2012estimation}, and
rare feature selection~\cite{yan2020rare}. See \cite{pedregosa2018adaptive} for an overview.

\paragraph{Games} 

Consider a two-player noncooperative game
 in which each player tries to selfishly minimize its own loss, with each loss depending on the actions of both players.  Typically, the goal is to find a Nash equilibrium, in which neither player can improve its loss by changing strategy:
\begin{align}\label{defNash1}
x^* \in \underset{x\in\Theta}{\arg\min}\; F(x,y^*)
\quad\text{and}\quad
y^* \in \underset{y\in\Omega }{\arg\min}\; G(x^*,y).
\end{align}
Assuming that the admissible
strategy sets $\Theta\subseteq \rR^{d_x}$ and $\Omega\subseteq \rR^{d_y}$ are closed and convex and
that $F$ and $G$ are differentiable, the first-order necessary conditions for solving
the Nash equilibrium problem are
\begin{align}\label{gameMono}
0 \in
\left[
\begin{array}{c}
\nabla_x F(x^*,y^*)\\
\nabla_y G(x^*,y^*)
\end{array}
\right]
+
\big(
N_\Theta(x^*) \times
N_\Omega(y^*)
\big).
\end{align}
\col{If $G=-F$,  then \eqref{defNash1} is a min-max game. If in addition,
$F$ is convex in $x$ and concave in $y$} then
 $B: (x,y) \mapsto (\nabla_x
F(x,y),-\nabla_y F(x,y))^\top$ is monotone\footnote{\col{Sufficient conditions for the monotonicity of \eqref{gameMono} in the case where $G\neq-F$  are discussed in e.g.~\cite{scutari2014real,briceno2013monotone}}}
on
$\rR^{d_x+d_y}$ \cite{rockafellar1970monotone}. 
\col{In many applications, $B$ is also Lipschitz continuous.}
In this situation, \eqref{gameMono} is a monotone inclusion
involving two operators $B$ and $N_{\Theta \times \Omega}$, with $B$ being
Lipschitz. Using the simultaneous version of GDA on~\eqref{defNash1} is
equivalent to applying the forward-backward method (FB)
\cite[Thm.~26.14]{bauschke2011convex} to
\eqref{gameMono}. However, convergence of FB requires that the operator $B$ be
\textit{cocoercive} \cite[Def.~4.10]{bauschke2011convex}, and not merely
Lipschitz \cite[Thm.~26.14]{bauschke2011convex}. Thus, simultaneous GDA fails
to converge for~\eqref{defNash1} without additional assumptions (see \cite[Prop.~1]{gidel2018a} for a simple counterexample).

Regularizers and further constraints may be imposed by adding more
operators to~\eqref{gameMono}.  For example, if one wished to apply a (nonsmooth)
convex regularizer $r:\rR^{d_x} \rightarrow \rR \cup \{+\infty\}$ to the $x$
variables and a similar regularizer $d:\rR^{d_y} \rightarrow \rR \cup \{+\infty\}$
for the $y$ variables, one would add the operator $A_2 : (x,y) \mapsto
\partial r(x) \times \partial d(y)$ to the right-hand side
of~\eqref{gameMono}.


\paragraph{ML applications of games}
Distributionally robust supervised learning (DRSL) is an emerging framework
for improving the stability and reliability of ML models in the face of
distributional shifts
\cite{yu2021fast,kuhn2019wasserstein,shafieezadeh2015distributionally,sinha2018certifiable,
lin2020gradient,NIPS2016_4588e674}.  Common approaches to DRSL formulate the
problem as a min-max game between a learner selecting the model parameters and
an adversary selecting a worst-case distribution subject to some ambiguity
set around the observed empirical distribution.  This min-max problem is often
further reduced to either a finite-dimensional saddlepoint problem or a convex
optimization problem.

DRSL is a source of games with multiple constraints/regularizers. One such
formulation, based on \cite{yu2021fast}, is discussed in the experiments
below. The paper \cite{NIPS2016_4588e674} uses an amiguity set based on
$f$-divergences, while \cite{sinha2018certifiable} introduces a Lagrangian
relaxation of the Wasserstein ball. When applied to models utilizing multiple
regularizers \cite{jacob2009group,savalle2012estimation,yan2020rare}, both of
these approaches lead to min-max problems with multiple regularizers.

Other applications of games in ML, although typically nonconvex, include
generative adversarial networks
(GANs)~\cite{NIPS2014_5ca3e9b1,pmlr-v70-arjovsky17a}, fair
classification~\cite{wadsworth2018achieving,zhang2018mitigating,edwards2015censoring,celis2019improved}
, and adversarial privacy \cite{huang2017context}.

\paragraph{Resolvents,  proximal operators, and projections}
A fundamental computational primitive for
solving monotone inclusions is the \textit{resolvent}. The resolvent of a monotone operator
$A$ is defined to be
$
J_A \triangleq (I+A)^{-1}
$
where $I$ is the identity operator and the inverse of any operator $T$ is
simply $T^{-1} : x \mapsto \{y:Ty \ni x\}$.  If $A$ is maximal monotone, then
for any $\rho>0$, $J_{\rho A}$ is single valued, nonexpansive, and has domain
equal to $\rR^d$~\cite[Thm. 21.1 and Prop. 23.8]{bauschke2011convex}.
Resolvents generalize proximal operators of convex functions: the proximal
operator of a CCP function $f$ is
\begin{align*}
\text{prox}_{\rho f}(t) \triangleq \arg\min_{x\in\rR^d}\left\{\rho f(x) + (1/2)\|x - t\|^2\right\}.
\end{align*}
It is easily proved that $\prox_{\rho f} = J_{\rho\partial f}$. In turn,
proximal operators generalize projection onto convex sets: if $f = \iota_\cC$,
then $\prox_{\rho f} = \proj_\cC$ for any $\rho>0$.


In many ML applications, proximal operators, and hence resolvents, are
relatively straightforward to compute. For examples, see
\cite[Sec.~6]{parikh2013proximal}.

\paragraph{Operator splitting methods}
\emph{Operator splitting methods} attempt to solve monotone inclusions such
as~\eqref{mono1} by a sequence of operations that each involve only one of the
operators $A_1,\ldots,A_n,B$. Such methods are often presented in the context
of convex optimization problems like \eqref{eqMultiReg}, but typically apply
more generally to monotone inclusions such as \eqref{mono1}. In the specific
context of~\eqref{mono1}, each iteration of such a method ideally handles
each $A_i$ via its resolvent and the Lipschitz operator $B$ by explicit (not
stochastic) evaluation.  This is a feasible approach if the original problem
can be decomposed in such a way that the resolvents of each $A_i$ are
relatively inexpensive to compute, and full evaluations of $B$ are possible.
Although not discussed here, more general formulations in which matrices couple the
arguments of the operators can broaden the applicability of operator splitting
methods.


\section{The Projective Splitting Framework}\label{secProjSplit}
Before introducing our proposed method, we give a brief introduction to the projective splitting class of methods.
\paragraph{The extended solution set}
Projective splitting is a primal-dual framework and operates in an extended space of primal and dual variables. Rather than finding a solution to \eqref{mono1}, we find a point in the \textit{extended solution set}
\begin{align}\label{Sdef}
\cS \triangleq \left\{
(z,w_1,\ldots,w_{n+1})\in\rR^{(n+2)d}
\;\Big|\;
w_i\in A_i(z)\, \forall\, i=1,\ldots,n,
w_{n+1}=B(z),
\sum_{i=1}^{n+1} w_i=0\right\}.
\end{align}
Given $p^*=(z^*,w_1^*\ldots,w_{n+1}^*)\in\cS$, it is straightforward to see
that $z^*$ solves \eqref{mono1}. Conversely, given a solution $z^*$ to
\eqref{mono1}, there must exist $w_1^*,\ldots,w_{n+1}^*$ such that
$(z^*,w_1^*,\ldots,w_{n+1}^*)\in\cS$.

Suppose $p^*=(z^*,w_1^*\ldots,w_{n+1}^*)\in\cS$.
Since $z^*$ solves \eqref{mono1}, $z^*$ is typically referred to as a \textit{primal solution}. The vectors $w_1^*,\ldots,w_{n+1}^*$ solve a dual inclusion not described here, and are therefore called a \textit{dual solution}.
It can be shown that
$\cS$ is closed and convex; see for example \cite{johnstone2020projective}.

We will assume that a solution to \eqref{mono1} exists, therefore the set $\cS$ is nonempty.

\paragraph{Separator-projection framework}
Projective splitting methods are instances of the general
\emph{separator-projection} algorithmic framework for locating a member of a
closed convex set $\cS$ within a linear space $\cP$.  Each iteration $k$
of algorithms drawn from this framework operates by finding a set $H_k$ which
separates the current iterate $p^k \in \cP$ from $\cS$, meaning that $\cS$ is
entirely in the set and $p^k$ typically is not. One then attempts to ``move closer" to
$\cS$ by projecting the $p^k$ onto $H_k$.
%
In the particular case of projective splitting applied to the
problem~\eqref{mono1} using~\eqref{Sdef}, we select the space $\cP$ to be
\begin{align}\label{subspaceP}
  \mathcal{P} &\triangleq
  \left\{(z,w_1,\ldots,w_{n+1})\in\rR^{(n+2)d}
  \;\Big|\;
  \suminp w_i = 0\right\},
\end{align}
and each separating set $H_k$ to be the half space
$\{p\in\cP\;|\;\varphi_k(p)\leq 0\}$ generated by an affine function
$\varphi_k : \cP \to \rR$.  The general intention is to construct $\varphi_k$ such that
$\varphi_k(p^k)>0$, but $\varphi_k(p^*)\leq 0$ for all $p^*\in\cS$.  The construction employed for $\varphi_k$ in the case of~\eqref{mono1} and~\eqref{Sdef} is of the form
\begin{align}\label{sepForm}
\varphi_k(z,w_1,\ldots,w_{n+1})
&\triangleq
\sum_{i=1}^{n+1}\langle z - x_i^k,y_i^k - w_i\rangle
\end{align}
for some points $(x_i^k,y_i^k)\in\rR^{2d}$, $i=1,\ldots,n+1$, that must be
carefully chosen (see below).  Note that any function of the
form~\eqref{sepForm} must be affine when restricted to $\cP$.  As mentioned
above, the standard separator-projection algorithm obtains its next iterate
$p^{k+1}$ by projecting $p^k$ onto $H_k$.  This calculation involves the
usual projection step for a half space, namely
\begin{align}\label{projStepUpdate}
p^{k+1} = p^k - \alpha_k\nabla\varphi_k,
\quad\text{ where }\quad \alpha_k = {\varphi_k(p^k)}/{\|\nabla\varphi_k\|^2},
\end{align}
where the gradient $\nabla\varphi_k$ is computed relative to $\cP$, thus
resulting in $p^{k+1} \in \cP$ (over- or under-relaxed variants of this step
are also possible).


\section{Proposed Method}
\label{secProposed}

The proposed method is given in Algorithm \ref{algSPS} and called
\textit{Stochastic Projective Splitting} (SPS). Unlike prior versions of
projective splitting,   SPS does not employ the stepsize $\alpha_k$
of~\eqref{projStepUpdate} that places the next iterate exactly on the
hyperplane given by $\varphi_k(p)=0$.  Instead, it simply moves in the
\textit{direction} $-\nabla\varphi_k$ with a pre-defined stepsize
$\{\alpha_k\}$. This fundamental change is required to deal with the
stochastic noise on lines \ref{lineNoise1} and \ref{lineXYend}. This noise
could lead to the usual choice of $\alpha_k$ defined in
\eqref{projStepUpdate} being unstable and difficult to analyze.
In order to guarantee convergence, the parameters $\alpha_k$ and $\rho_k$ must be
chosen to satisfy certain conditions given below.
Note that the gradient is calculated with respect to the subspace $\cP$ defined in
\eqref{subspaceP}; since the algorithm is initialized within $\cP$, it remains
in $\cP$, within which $\varphi_k$ is affine.  Collectively, the updates on lines
\ref{lineProj1}-\ref{lineProj2} are equivalent to
$
p^{k+1} = p^k - \alpha_k\nabla\varphi_k,
$
where $p^k = (z^k,w_1^k,\ldots,w_{n+1}^k)$.

\begin{algorithm}[b]
{
  \DontPrintSemicolon
\SetKwInOut{Input}{Input}
\Input{$p^1 = (z^1,w_1^1,\ldots,w_{n+1}^1)$ s.t. $\suminp w_i^1 = 0$, $\{\alpha_k,\rho_k\}_{k=1}^\infty$, $\tau>0$}
\For{$k=1,2,\ldots$}
{
  \For{$i=1,\ldots,n$}
  {
    $t_i^k = z^k + \tau w_i^k$\label{lineXYone}\;
    $x_i^k = J_{\tau A_i}(t_i^k)$\label{xupdate}\;
    $y_i^k = \tau^{-1}(t_i^k - x_i^k)$\label{yupdate}\;
  }
    $r^k = B(z^k) + \epsilon^k$
        \tcp*[r]{$\epsilon^k$ is unknown noise term}\label{lineNoise1}
    $x_{n+1}^k = z^k - \rho_k(r^k - w_{n+1}^k)$ \label{xupdateLip} \;
    $y_{n+1}^k = B(x_{n+1}^k) + e^k$\tcp*[f]{$e^k$ is unknown noise term}
    \label{lineXYend}

  $z^{k+1} = z^k - \alpha_k\suminp y_i^k$ \label{lineProj1}\;
  $w_i^{k+1} = w_i^k   - \alpha_k(x_i^k - \frac{1}{n+1}\suminp x_i^k)\quad i=1,\ldots,n+1$  \label{lineProj2}
}
}
\caption{Stochastic Projective Splitting (SPS)}
\label{algSPS}
\end{algorithm}

Note that SPS does not explicitly evaluate $\varphi_k$, which is only used in
the analysis, but it does keep track of $(x_i^k,y_i^k)$ for $i=1,\ldots,n+1$.
The algorithm's memory requirements scale linearly with the number of
nonsmooth operators $n$ in the inclusion~\eqref{mono1}, with the simplest
implementation storing $(3n + 5)d$ working-vector elements.  This requirement
can be reduced to $(n + 7)d$ by using a technique discussed in the
appendix.  In most applications, $n$ will be small, for example $2$ or $3$.

\paragraph{Updating $(x_i^k,y_i^k)$}
The variables $(x_i^k,y_i^k)$ are updated on lines
\ref{lineXYone}-\ref{lineXYend} of Algorithm \ref{algSPS}, in which $e^k$ and
$\epsilon^k$ are $\rR^d$-valued random variables defined on a probability
space $(\Omega,\mbF,P)$. For $B$ we use a new, noisy version of the
two-forward-step procedure from \cite{johnstone2020projective}. For each
$A_i$, $i=1,\ldots,n$, we use the same resolvent step used in previous
projective splitting papers, originating with \cite{eckstein2008family}.  In the case $\epsilon^k = e^k = 0$,
the selection of the $(x_i^k,y_i^k)$ is identical to that
proposed in~\cite{johnstone2020projective}, resulting in the hyperplane $\{p:\varphi_k(p) = 0\}$ strictly separating
$p^k$ from $\cS$.

SPS achieves full splitting of \eqref{mono1}.  Each $A_i$ is processed
separately using a resolvent and the Lipschitz term $B$ is processed via a
stochastic gradient oracle. When the $A_i$ arise from regularizers or constraints, as
discussed in Section \ref{secBackG}, their resolvents can be readily computed so
long as their respective proximal/projection operators have a convenient
form.

\paragraph{Noise assumptions}
Let $\mbF_k\triangleq\sigma(p^1,\ldots,p^k)$ and
$\mbE_k \triangleq\sigma(\epsilon^k)$. The stochastic estimators for the gradients,
$r^k$ and $y_{n+1}^k$, are assumed to be \textit{unbiased}, that is, the noise has
mean $0$ conditioned on the past:
\begin{align}
\E[\epsilon^k|\mbF_k]=0,\quad  \E[e^k|\mbF_k]=0\quad a.s.\label{unbiasedAss}
\end{align}
We impose the following mild assumptions on the variance of the noise:
\begin{align}\label{noiseBound1}
 \E\left[ \|\epsilon^k\|^2|\mbF_k\right] &\leq N_1+N_2\|B(z^k)\|^2\quad a.s.
\\\label{noiseBound2}
\E\left[ \|e^k\|^2|\mbF_k,\mbE_k\right]&\leq N_3+N_4\|B(x_{n+1}^k)\|^2\quad a.s.,
\end{align}
where $0\leq N_1, N_2, N_3, N_4 <\infty$.
We do not require $e^k$ and $\epsilon^k$ to be independent of one another.

\paragraph{Stepsize choices}
The stepsizes $\rho_k$ and $\alpha_k$ are assumed to be deterministic. A
constant stepsize choice which obtains a non-asymptotic convergence rate will be
considered in the next section (Theorem \ref{thmConvR}).  The stepsize conditions we will impose to
guarantee almost-sure convergence (Theorem \ref{thmMain}) are
\begin{align}\label{stepRuleSumInf}
	\sumk \alpha_k\rho_k = \infty,\quad
	\sumk \alpha_k^2 <\infty,\quad
	\sumk \alpha_k\rho_k^2 <\infty,
	\,\,
\text{ and }
\,\,
	\rho_k &\leq \orho <\frac{1}{L}.
\end{align}
For example, in the case $L=1$, a particular choice which satisfies these constraints is
\begin{align*}
	\alpha_k = k^{-0.5 - p} \,\,\text{ for }\,\,0<p<0.5,\,\, \text{ and }\,\,
	\rho_k = k^{-0.5+t} \,\,\text{ for }\,\, p \leq t < 0.5p+0.25.
\end{align*}
For simplicity, the stepsizes $\tau$ used for the resolvent updates in lines
\ref{lineXYone}-\ref{yupdate} are fixed, but they could be allowed to vary
with both $i$ and $k$ so long as they have finite positive lower and
upper bounds.

\section{Main Theoretical Results}\label{secMainResults}

\begin{theorem}\label{thmMain}
For Algorithm \ref{algSPS}, suppose \eqref{unbiasedAss}-\eqref{stepRuleSumInf} hold.
Then with probability one it holds that $z^k\to z^*$, where $z^*$ solves \eqref{mono1}.
\end{theorem}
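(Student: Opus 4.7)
The plan is to derive a stochastic quasi-Fej\'{e}r recursion for $\|p^k - p^*\|^2$ with arbitrary $p^* = (z^*,w_1^*,\ldots,w_{n+1}^*) \in \cS$, and then invoke the Robbins--Siegmund supermartingale convergence theorem in the form used by \cite{combettes2015stochastic}. I first introduce the noise-free counterpart $\bar\varphi_k$ of $\varphi_k$, obtained by replacing $r^k$ with $B(z^k)$ and $y_{n+1}^k$ with $B(\hat x_{n+1}^k)$, where $\hat x_{n+1}^k = z^k - \rho_k(B(z^k) - w_{n+1}^k)$. The deterministic analysis of \cite{johnstone2020projective} should give $\bar\varphi_k(p^*) \le 0$ for every $p^* \in \cS$, together with a lower bound of the form $\bar\varphi_k(p^k) \ge c\,\rho_k R_k$, where $R_k \ge 0$ is a residual that vanishes exactly when $p^k \in \cS$; this is where the condition $\rho_k \le \orho < 1/L$ will be used, to dominate the Lipschitz cross-terms generated by the two-forward-step procedure on lines \ref{xupdateLip}--\ref{lineXYend}.

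Using $p^{k+1} = p^k - \alpha_k\nabla\varphi_k$ in $\cP$, I expand
\begin{equation*}
\|p^{k+1} - p^*\|^2 = \|p^k - p^*\|^2 - 2\alpha_k\bigl\langle \nabla\varphi_k,\, p^k - p^*\bigr\rangle + \alpha_k^2\|\nabla\varphi_k\|^2,
\end{equation*}
and decompose $\nabla\varphi_k = \nabla\bar\varphi_k + \Delta_k$, where $\Delta_k$ collects the noise contributions from $\epsilon^k$ and $e^k$ (including a term $B(x_{n+1}^k) - B(\hat x_{n+1}^k)$ bounded in norm by $L\rho_k\|\epsilon^k\|$). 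Since $\bar\varphi_k$ is affine on $\cP$, $\langle \nabla\bar\varphi_k,\, p^k - p^*\rangle = \bar\varphi_k(p^k) - \bar\varphi_k(p^*) \ge c\,\rho_k R_k$. Taking conditional expectation given $\mbF_k$, the cross term $\E[\langle \Delta_k,\, p^k - p^*\rangle\mid \mbF_k]$ vanishes by \eqref{unbiasedAss}, where the $e^k$ contribution is handled by the tower property against the inner $\sigma$-field $\sigma(\mbF_k,\mbE_k)$ (this is why the assumption is stated conditional on both). For the quadratic term I would use $\|\nabla\varphi_k\|^2 \le 2\|\nabla\bar\varphi_k\|^2 + 2\|\Delta_k\|^2$, bound $\|\nabla\bar\varphi_k\|^2 \le C_0(1 + \|p^k - p^*\|^2)$ via Lipschitz continuity of $B$ and nonexpansiveness of the $J_{\tau A_i}$, and bound $\E[\|\Delta_k\|^2\mid \mbF_k]$ via \eqref{noiseBound1}--\eqref{noiseBound2} combined with $\|B(z^k)\|^2 \le 2\|B(z^*)\|^2 + 2L^2\|z^k - z^*\|^2$ (and analogously for $B(x_{n+1}^k)$). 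Collecting terms should yield
\begin{equation*}
\E\bigl[\|p^{k+1} - p^*\|^2 \,\big|\, \mbF_k\bigr] \;\le\; \bigl(1 + C_1\alpha_k^2\bigr)\|p^k - p^*\|^2 \;-\; 2c\,\alpha_k\rho_k R_k \;+\; C_2\bigl(\alpha_k^2 + \alpha_k\rho_k^2\bigr),
\end{equation*}
for positive constants $c, C_1, C_2$ depending on $L, \orho, \tau, \|B(z^*)\|$ and the $N_j$.

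By \eqref{stepRuleSumInf}, $\sum_k \alpha_k^2 < \infty$ and $\sum_k(\alpha_k^2 + \alpha_k\rho_k^2) < \infty$, so the Robbins--Siegmund lemma gives, almost surely, that $\|p^k - p^*\|$ converges for every $p^* \in \cS$ (hence $\{p^k\}$ is bounded a.s.) and $\sum_k \alpha_k\rho_k R_k < \infty$. Combined with $\sum_k \alpha_k\rho_k = \infty$, the latter forces $\liminf_k R_k = 0$ a.s., so some subsequence $p^{k_j}$ has $R_{k_j} \to 0$. Using the explicit form of $R_k$, maximal monotonicity of each $A_i$, and continuity of $B$, any cluster point of $p^{k_j}$ lies in $\cS$. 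A standard Opial-type argument (every cluster point is in $\cS$, all $\|p^k - p^*\|$ converge for $p^* \in \cS$) then yields $p^k \to \bar p \in \cS$ a.s., and in particular $z^k \to \bar z$ where $\bar z$ solves \eqref{mono1}.

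The main obstacle is the second step: constructing the right residual $R_k$ and establishing $\bar\varphi_k(p^k) \ge c\,\rho_k R_k$ uniformly. This requires carefully tracking how the noisy forward step $x_{n+1}^k = z^k - \rho_k(r^k - w_{n+1}^k)$ interacts with the second evaluation $y_{n+1}^k = B(x_{n+1}^k) + e^k$; the restriction $\orho < 1/L$ is precisely what absorbs the $O(\rho_k^2 L^2)$ cross-terms, and $R_k$ must be strong enough that its vanishing along a subsequence implies membership in $\cS$, yet compatible with the $O(\alpha_k\rho_k^2)$ variance residual produced by \eqref{noiseBound1}--\eqref{noiseBound2}.
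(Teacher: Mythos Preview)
Your overall architecture is correct and matches the paper's: establish a stochastic quasi-Fej\'er recursion of the form you wrote, invoke Robbins--Siegmund via \cite{combettes2015stochastic}, extract $\liminf_k R_k = 0$ from $\sum_k \alpha_k\rho_k = \infty$, and finish with an Opial-type argument. The target inequality and the endgame are essentially identical to the paper's.

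However, there is a genuine gap in your middle step. You decompose $\nabla\varphi_k = \nabla\bar\varphi_k + \Delta_k$ and assert that $\E[\langle \Delta_k,\, p^k - p^*\rangle \mid \mbF_k] = 0$. This is false: the $z$-component of $\Delta_k$ contains the term $B(x_{n+1}^k) - B(\hat x_{n+1}^k)$, which you yourself flag and bound by $L\rho_k\|\epsilon^k\|$. But this is a \emph{nonlinear} function of $\epsilon^k$ and is not mean-zero given $\mbF_k$ unless $B$ is affine. The resulting bias contributes, after the outer $2\alpha_k$, a term of order $\alpha_k\rho_k\,\E[\|\epsilon^k\|\mid\mbF_k]\,\|z^k - z^*\|$. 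No matter how you split this with Young's inequality, you are left either with a multiplicative factor $(1 + c\,\alpha_k\rho_k)$ on $\|p^k - p^*\|^2$ or an additive term proportional to $\alpha_k\rho_k$, neither of which is summable under \eqref{stepRuleSumInf} (indeed $\sum_k \alpha_k\rho_k = \infty$ is required). This breaks the quasi-Fej\'er recursion.

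The paper avoids this by \emph{not} introducing $\bar\varphi_k$ or $\hat x_{n+1}^k$. It lower-bounds $\varphi_{n+1,k}(p^k) - \varphi_{n+1,k}(p^*)$ directly, and in doing so compares $B(x_{n+1}^k)$ to $B(z^k)$ rather than to $B(\hat x_{n+1}^k)$. The Lipschitz bound then gives
\[
\langle z^k - x_{n+1}^k,\, B(x_{n+1}^k) - B(z^k)\rangle \;\ge\; -L\,\|z^k - x_{n+1}^k\|^2 \;=\; -L\rho_k^2\,\|B(z^k) - w_{n+1}^k + \epsilon^k\|^2.
\]
Expanding the square yields (i) $-L\rho_k^2\|B(z^k) - w_{n+1}^k\|^2$, absorbed by the main $\rho_k\|B(z^k) - w_{n+1}^k\|^2$ term via $\rho_k L < 1$; (ii) a cross term in $\epsilon^k$ that is genuinely mean-zero given $\mbF_k$; and (iii) $-L\rho_k^2\|\epsilon^k\|^2$, whose conditional expectation is of order $\rho_k^2(1 + \|p^k - p^*\|^2)$ by \eqref{noiseBound1}. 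After multiplication by $\alpha_k$ this produces exactly the summable $\alpha_k\rho_k^2$ contributions in your target recursion. The remaining noise terms that appear --- $\langle z^k - z^*,\, e^k\rangle$ and $\rho_k(1-2\rho_kL)\langle \epsilon^k,\, B(z^k) - w_{n+1}^k\rangle$ --- really are mean-zero given $\mbF_k$. So the fix is to drop the $\bar\varphi_k$ decomposition and carry out the algebra on $\varphi_{n+1,k}(p^k) - \varphi_{n+1,k}(p^*)$ directly; the rest of your plan then goes through unchanged.
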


\paragraph{Proof sketch}
Theorem \ref{thmMain} is proved in the appendix, but we provide a brief sketch here.
The proof begins by deriving a simple recursion inspired by the analysis of SGD \cite{robbins1951stochastic}.
Since
$
	p^{k+1} = p^k - \alpha_k\nabla\varphi_k,
$
a step of projective splitting can be viewed as GD applied to the affine hyperplane
generator function $\varphi_k$. Thus, for any  $p^*\in\cP$,
\begin{align}
	\|p^{k+1} - p^*\|^2
	&=
	\|p^k - p^*\|^2 - 2\alpha_k\langle \nabla\varphi_k,p^k - p^*\rangle + \alpha_k^2\|\nabla\varphi_k\|^2
	\nonumber\\\label{eqStart}
	&=
		\|p^k - p^*\|^2 - 2\alpha_k(\varphi_k(p^k) - \varphi_k(p^*)) + \alpha_k^2\|\nabla\varphi_k\|^2,
\end{align}
where in the second equation we have used that $\varphi_k(p)$ is affine on $\cP$.
The basic strategy is to show that, for any $p^*\in\cS$,
\begin{align*}
\E[\|\nabla\varphi_k\|^2|\mbF_k] \leq C_1\|p^k - p^*\|^2 + C_2  \quad a.s.
\end{align*}
for some $C_1, C_2 > 0$. This condition allows one to establish stochastic
quasi-Fej\'{e}r monotonicity (SQFM) \cite[Proposition
2.3]{combettes2015stochastic} of the iterates to $\cS$. One consequence of
SQFM is that with probability one there exists a subsequence $v_k$ such that
$\varphi_{v_k}(p^{v_k}) -
\varphi_{v_k}(p^*)$ converges to $0$.
Furthermore, roughly speaking, we will show that
$
\varphi_{k}(p^{k}) -
\varphi_{k}(p^*)$
provides an upper bound on the
following ``approximation residual" for SPS:
\begin{align}
O_k
\triangleq
\sumin \|y_i^k - w_i^k\|^2
+\sumin \|z^k - x_i^k\|^2
+ \| B (z^k) - w_{n+1}^k\|^2
.\label{Okdef}
\end{align}
$O_k$ provides an approximation error for SPS, as formalized in the following lemma:
\begin{lemma}
For SPS, $p^k=(z^k,w_1^k,\ldots,w_{n+1}^k)\in\cS$ if and only if $O_k=0$.\label{lemOk}
\end{lemma}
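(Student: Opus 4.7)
The plan is to prove both implications directly by unpacking the definitions of $\cS$, $O_k$, and the updates on lines \ref{lineXYone}--\ref{yupdate} and \ref{lineNoise1}--\ref{lineXYend}. A key observation I would use first is that the algorithm's updates on lines \ref{lineProj1}--\ref{lineProj2} preserve the affine constraint $\sum_{i=1}^{n+1} w_i^k = 0$ (this follows because $w_i^{k+1}-w_i^k$ has zero sum by construction); combined with the initialization in $\cP$, the iterates lie in $\cP$ for all $k$. Thus the dual sum-to-zero condition in \eqref{Sdef} is automatic, and membership in $\cS$ reduces to verifying $w_i^k \in A_i(z^k)$ for $i=1,\ldots,n$ and $w_{n+1}^k = B(z^k)$.

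For the forward direction, suppose $p^k \in \cS$. The last term $\|B(z^k)-w_{n+1}^k\|^2$ is immediately zero. For each $i \leq n$, the hypothesis $w_i^k \in A_i(z^k)$ implies $z^k + \tau w_i^k \in z^k + \tau A_i(z^k)$, so by definition of the resolvent, $J_{\tau A_i}(z^k + \tau w_i^k) = z^k$. Hence $x_i^k = z^k$ and then $y_i^k = \tau^{-1}(t_i^k - x_i^k) = w_i^k$, showing both corresponding terms in $O_k$ vanish. So $O_k = 0$.

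For the converse, suppose $O_k=0$. Then termwise: $y_i^k = w_i^k$, $x_i^k = z^k$ for $i = 1, \ldots, n$, and $w_{n+1}^k = B(z^k)$. The defining property of the resolvent $x_i^k = J_{\tau A_i}(t_i^k)$ gives $\tau^{-1}(t_i^k - x_i^k) \in A_i(x_i^k)$, i.e., $y_i^k \in A_i(x_i^k)$. Substituting $x_i^k = z^k$ and $y_i^k = w_i^k$ yields $w_i^k \in A_i(z^k)$. Combined with $w_{n+1}^k = B(z^k)$ and the sum-to-zero property established above, $p^k \in \cS$.

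I do not anticipate a serious obstacle here: both implications are bookkeeping consequences of how the $(x_i^k,y_i^k)$ are defined, once one notes the equivalence $w \in A(z) \Leftrightarrow J_{\tau A}(z+\tau w) = z$. The only subtlety worth flagging is the iterate-invariance of $\cP$, since the ``only if'' direction tacitly relies on $\sum_i w_i^k = 0$ to conclude $p^k \in \cS$ rather than merely satisfying the inclusion relations.
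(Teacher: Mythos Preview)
Your proposal is correct and matches the paper's own proof essentially line for line: both directions rely on the resolvent identity $w\in A(z)\iff J_{\tau A}(z+\tau w)=z$ together with the algorithm's maintained invariant $\sum_{i=1}^{n+1} w_i^k=0$. The only cosmetic difference is that the paper treats the implication $O_k=0\Rightarrow p^k\in\cS$ first and you treat it second.
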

\vspace{-1.5ex}
Since $y_i^k\in A_i(x_i^k)$ for $i=1,\ldots,n$, having $O_k=0$ implies that $z^k =
x_i^k$, $w_i^k = y_i^k$, and thus $w_i^k\in A_i(z^k)$ for $i=1,\ldots,n$.
Since $w_{n+1}^k = B(z^k)$ and $\sum_{i=1}^{n+1}w_i^k =
0$, it follows that $z^k$ solves \eqref{mono1}.  The reverse direction is proved
in the appendix.

The quantity $O_k$ generalizes the role played by the norm of the gradient in
algorithms for smooth optimization.  In particular, in the special case where
$n=0$ and $B(z)=\nabla f(z)$ for some smooth convex function $f$, one has $O_k
= \|\nabla f(z^k)\|^2$.

Combining the properties of $O_k$ with other results following from SQFM (such as
boundedness) will allow us to derive almost-sure convergence of the iterates
to a solution of \eqref{mono1}.

\paragraph{Convergence rate}
\label{secConvRate}

We can also establish non-asymptotic convergence rates for the approximation
residual $O_k$:
\begin{theorem}\label{thmConvR}
Fix the total iterations $K\geq 1$ of Algorithm \ref{algSPS} and
set
\begin{align}\label{step1}
\forall k=1,\dots, K: \rho_k=\rho\triangleq
\min
\left\{
K^{-1/4},\frac{1}{2L}
\right\}
\quad\text{ and }\quad \alpha_k =
C_f \rho^2
\end{align}
for some $C_f>0$.
Suppose
\eqref{unbiasedAss}-\eqref{noiseBound2} hold.
Then
$$
\frac{1}{K}\sum_{j=1}^K
\E[O_j]
=
\bigO(K^{-1/4})
$$
where the constants are given (along with the proof) in the appendix.
\end{theorem}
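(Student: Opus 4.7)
}
The plan is to turn the identity~\eqref{eqStart} into a one-step conditional-expectation inequality in which the decrease $\|p^k-p^*\|^2-\|p^{k+1}-p^*\|^2$ dominates a positive multiple of $\rho_k O_k$ up to an additive $O(\rho_k^2)$ noise term, and then to sum and telescope. Taking $p^*\in\cS\cap\cP$ and using unbiasedness~\eqref{unbiasedAss}, \eqref{eqStart} yields
\begin{align*}
\E[\|p^{k+1}-p^*\|^2\mid\mbF_k]
&=\|p^k-p^*\|^2-2\alpha_k\,\E[\varphi_k(p^k)-\varphi_k(p^*)\mid\mbF_k]+\alpha_k^2\,\E[\|\nabla\varphi_k\|^2\mid\mbF_k].
\end{align*}
The argument then reduces to controlling the two conditional expectations on the right.

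First I would recover, up to stochastic perturbations, the deterministic bounds from~\cite{johnstone2020projective} that underlie Lemma~\ref{lemOk}. Writing out \eqref{sepForm} and using the resolvent identity $\langle z^k-x_i^k,y_i^k-w_i\rangle=\tau\|y_i^k-w_i^k\|^2/... $ etc.\ for $i\le n$, together with $z^k-x_{n+1}^k=\rho_k(r^k-w_{n+1}^k)$ for the $B$-block, one obtains a lower bound of the shape
\begin{align*}
\E[\varphi_k(p^k)-\varphi_k(p^*)\mid\mbF_k]
&\;\ge\; c_1\,\rho_k\,O_k\;-\;c_3\rho_k^2\bigl(N_1+N_3+(N_2+N_4)\|z^k-z^*\|^2+1\bigr),
\end{align*}
and, similarly, using $y_{n+1}^k=B(x_{n+1}^k)+e^k$ together with $L$-Lipschitzness of $B$ to bound $\|B(z^k)\|^2$ and $\|B(x_{n+1}^k)\|^2$ in terms of $\|z^k-z^*\|^2$,
\begin{align*}
\E[\|\nabla\varphi_k\|^2\mid\mbF_k]
&\;\le\; c_2\,O_k\;+\;c_5\bigl(N_1+N_3+(N_2+N_4)\|z^k-z^*\|^2+1\bigr).
\end{align*}
The crucial feature here is the extra factor $\rho_k$ in the lower bound on $\E[\varphi_k(p^k)-\varphi_k(p^*)\mid\mbF_k]$, which comes purely from the $B$-block, since the contribution of that block to $\varphi_k$ is of the form $\rho_k\langle r^k-w_{n+1}^k,\,y_{n+1}^k-w_{n+1}^k\rangle$.

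Substituting both bounds into the conditional recursion with $\rho_k\equiv\rho$ and $\alpha_k\equiv\alpha=C_f\rho^2$, the $\alpha_k^2\cdot c_5(N_2+N_4)\|z^k-z^*\|^2$ term and the analogous contribution from the lower bound are of order $\rho^4\|z^k-z^*\|^2$ and $\rho^4\|z^k-z^*\|^2$ respectively, which for $\rho\le 1/(2L)$ can be absorbed into the leading decrease $\|p^k-p^*\|^2-\|p^{k+1}-p^*\|^2$ by choosing $C_f$ small enough (depending on $L,\tau,N_2,N_4$). Taking total expectation, summing for $k=1,\dots,K$, and telescoping gives
\begin{align*}
2c_1\,\rho\,\alpha\sum_{k=1}^K\E[O_k]
\;\le\;\|p^1-p^*\|^2\;+\;K\,(\rho^2\alpha+\alpha^2)\cdot M,
\end{align*}
for a constant $M$ depending on $L,\tau,N_1,\dots,N_4$ and $\|B(z^*)\|$. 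Dividing by $2c_1\,K\,\rho\,\alpha=2c_1 C_f K\rho^3$ yields
\begin{align*}
\frac{1}{K}\sum_{k=1}^K\E[O_k]
\;\le\;\frac{\|p^1-p^*\|^2}{2c_1 C_f\,K\rho^3}+O(\rho),
\end{align*}
and $\rho=\min\{K^{-1/4},1/(2L)\}$ makes both terms $O(K^{-1/4})$, which is the claimed bound.

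The main obstacle I expect is the first step: carefully replicating the deterministic projective-splitting algebra from~\cite{johnstone2020projective} while keeping the $\rho_k$-dependence explicit and ensuring that the stochastic error enters only through $O(\rho_k^2)$ rather than $O(\rho_k)$ terms. The unbiasedness~\eqref{unbiasedAss} kills the linear-in-noise cross terms under $\E[\,\cdot\mid\mbF_k]$, but several of those cross terms involve $y_{n+1}^k$, whose randomness is not $\mbF_k$-measurable, so one has to iterate the conditioning (first on $\mbF_k\vee\mbE_k$, then on $\mbF_k$) and apply~\eqref{noiseBound2} before~\eqref{noiseBound1}. Once that is done, the remaining steps are standard stochastic-approximation telescoping.
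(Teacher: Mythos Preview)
Your overall strategy matches the paper's: start from \eqref{eqStart}, lower-bound the conditional $\varphi_k$-gap by $c_1\rho_k O_k$ minus an $O(\rho_k^2)$ term containing $\|p^k-p^*\|^2$, upper-bound $\E[\|\nabla\varphi_k\|^2\mid\mbF_k]$, substitute, and telescope. Your remarks about iterating the conditioning through $\mbF_k\vee\mbE_k$ to handle $e^k$ before applying \eqref{noiseBound1} are also correct and are exactly what the paper does. A minor difference is that the paper bounds $\E[\|\nabla\varphi_k\|^2\mid\mbF_k]$ by $C_1\|p^k-p^*\|^2+C_2$ rather than by a multiple of $O_k$; either route leads to the same one-step recursion once combined with the gap bound.

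The genuine gap is the sentence ``can be absorbed into the leading decrease $\|p^k-p^*\|^2-\|p^{k+1}-p^*\|^2$ by choosing $C_f$ small enough.'' After substituting both bounds, the recursion has the form
\[
\E[\|p^{k+1}-p^*\|^2\mid\mbF_k]\le\big(1+C_1\alpha^2+C_3\alpha\rho^2\big)\|p^k-p^*\|^2-\alpha\rho\,T_k+C_2\alpha^2+C_4\alpha\rho^2,
\]
with $T_k$ a rescaled $O_k$. The $\|p^k-p^*\|^2$-proportional error enters as a \emph{multiplicative} perturbation; no choice of $C_f$ makes it disappear, and there is nothing to ``absorb into'' since $\|p^k-p^*\|^2-\|p^{k+1}-p^*\|^2$ need not be nonnegative. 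If you telescope as written you are left with $\sum_{k\le K}(C_1\alpha^2+C_3\alpha\rho^2)\E\|p^k-p^*\|^2$, for which you have no a~priori control. The paper closes this with a Gr\"onwall step rather than any smallness assumption: since $\alpha^2$ and $\alpha\rho^2$ are both $O(\rho^4)\le O(1/K)$, one defines $R_k=\E\|p^k-p^*\|^2+\alpha\rho\sum_{j<k}\E[T_j]$, gets $R_{k+1}\le(1+c/K)R_k+M/K$, and iterates to $R_{K+1}\le e^{c}(R_1+M/c)$. Dividing by $\alpha\rho K=C_f\rho^3K$ then yields the $O(K^{-1/4})$ rate, valid for \emph{every} $C_f>0$ (the constants carry the factor $e^{c}$, which depends on $C_f$). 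Once you replace the absorption claim by this multiplicative iteration, your sketch is complete and coincides with the paper's proof.
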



Theorem \ref{thmConvR} implies that if we pick an iterate $J$ uniformly at random
from $1,\ldots,K$, then the expected value of $O_J$ is $\bigO(K^{-1/4})$.
As far as we know, this is the first convergence rate for a stochastic full-splitting method solving \eqref{mono1}, and it is not clear whether it can be reduced, either by a better analysis or a better method.  Faster rates are certainly possible for deterministic methods; Tseng's method obtains $\bigO(K^{-1})$ rate \cite{monteiro2010complexity}.
Faster rates are also possible for stochastic methods under \textit{strong} monotonicity and when $n=0$ \cite{kannan2019optimal,NEURIPS2020_ba9a56ce}.  Faster \textit{ergodic} rates for stochastic methods have been proved for special cases with $n=1$ with a compact constraint \cite{juditsky2011solving}.
What is needed is a tight lower bound on the convergence rate of any first-order splitting method applied to \eqref{mono1}.
Since nonsmooth convex optimization is a special case of \eqref{mono1}, lower bounds for that problem apply \cite{nemirovskij1983problem}, but they may not be tight for the more general monotone inclusion problem.

\section{Related Work}
Arguably the three most popular classes of operator splitting algorithms are
forward-backward splitting (FB) \cite{combettes2011proximal}, Douglas-Rachford
splitting (DR) \cite{lions1979splitting}, and Tseng's method
\cite{tseng2000modified}. The extragradient method (EG) is similar to Tseng's
method, but has more projection steps per iteration and only applies to
variational inequalities
\cite{korpelevich1977extragradient,nemirovski2004prox}. The popular
Alternating Direction Method of Multipliers (ADMM), in its standard form, is a
dual application of DR \cite{gabay1983chapter}.  FB, DR, and Tseng's method
apply to monotone inclusions involving two operators, with varying assumptions
on one of the operators. It is possible to derive splitting methods for the
more complicated inclusion \eqref{mono1}, involving more than two operators,
by applying Tseng's method to a product-space reformulation
\cite{briceno2011monotone+,combettes2012primal} (for more on the product-space
setting, see the appendix). The recently developed forward-reflected-backward
method \cite{malitsky2020forward} can be used in the same way.  The
three-operator splitting method \cite{davis2015three} can only be applied to
\eqref{mono1} if $B$ is cocoercive rather than merely Lipchitz, and thus its
usefulness is mostly limited to optimization applications and not games.

The above-mentioned methods are all deterministic, but stochastic operator
splitting methods have also been developed.  The preprint
\cite{bot2019forward} develops a stochastic version of Tseng's method under
the requirement that the noise variance goes to $0$. In ML, this could be achieved with
the use of perpetually increasing batch sizes,
a strategy that is impractical in many scenarios.  The
stochastic version of FRB proposed in \cite{van2021convergence} has more
practical noise requirements, but has stronger assumptions on the problem
which are rarely satisfied in ML applications: either uniform/strong
monotonicity or a bounded domain.  The papers  \cite{NIPS2016_5d6646aa} and
\cite{pedregosa2019proximal} consider stochastic variants of three-operator
splitting, but they can only be applied to optimization problems.  The methods of
\cite{zhao2018stochastic} and \cite{bohm2020two} can be applied to simple
saddle-point problems involving a single regularizer.

There are several alternatives to the (stochastic) extragradient method that
reduce the number of gradient evaluations per iteration from two to one
\cite{NEURIPS2019_4625d8e3,malitsky2020forward,gidel2018a}. However,  these
methods have more stringent stepsize limits, making it unclear \emph{a priori}
whether they will outperform two-step methods.

DSEG is a stochastic version of EG \cite{NEURIPS2020_ba9a56ce}.  The primary
innovation of DSEG is that it uses different stepsizes for the
extrapolation and update steps, thereby resolving some of the convergence
issues affecting stochastic EG. As noted earlier, DSEG is the special case of
our SPS method in which $n=0$, that is, no regularizers/constraints are present in
the underlying game.  The analysis in \cite{NEURIPS2020_ba9a56ce} also did not
consider the fixed stepsize choice given in Theorem \ref{thmConvR}.


\section{Experiments}\label{secExps}
We now provide some numerical results regarding the performance of SPS as
applied to distributionally robust supervised learning (DRSL). We follow the
approach of \cite{yu2021fast}, which introduced a min-max formulation of
Wasserstein DRSL.  While other approaches reduce the problem to convex
optimization, \cite{yu2021fast} reduces it to a finite-dimensional min-max
problem amenable to the use of stochastic methods on large datasets.  However,
unlike our proposed SPS method, the variance-reduced extragradient method that
\cite{yu2021fast} proposes cannot handle multiple nonsmooth regularizers or
constraints on the model parameters.

Consequently, we consider distributionally robust sparse logistic regression
(DRSLR),  a problem class equivalent to that considered in \cite{yu2021fast},
but with an added $\ell_1$ regularizer, a standard tool to induce
sparsity.  We solve the following convex-concave min-max problem:
\renewcommand{\arraystretch}{1.4}
\begin{align}
\begin{array}{rl}
\displaystyle{\min_{\substack{\beta\in\rR^d \\ \lambda\in\rR\,\,\,}}} \;\;
\displaystyle{\max_{\gamma\in\rR^m}}
&
\displaystyle{
\left\{
\lambda(\delta - \kappa) +
\frac{1}{m}\sum_{i=1}^m\Psi(\langle \hat{x}_i,\beta\rangle)
+
\frac{1}{m}
\sum_{i=1}^m
\gamma_i(
\hat{y}_i\langle\hat{x}_i,\beta\rangle - \lambda\kappa
)
+
c\|\beta\|_1
\right\}
}
\\
\,\text{s.t.} &
\|\beta\|_2\leq \lambda/(L_\Psi+1) \qquad \|\gamma\|_\infty\leq 1.
\end{array}
\label{drslr}
\end{align}
This model is identical to that of~\cite[Thm. 4.3]{yu2021fast} except for the
addition of the $\ell_1$ regularization term $c\|\beta\|_1$, where $c\geq 0$
is a given constant.  The goal is to learn the model weights $\beta$ from a
training dataset of $m$ feature vectors $\hat{x}_i$ and corresponding labels
$\hat{y}_i$.  Rather than computing the expected loss over the training set,
the formulation uses, for each $\beta$, the worst possible distribution within
a Wasserstein-metric ball around the empirical distribution of the
$\{(\hat{x}_i,\hat{y}_i)\}$, with the parameter $\delta\geq 0$ giving the
diameter of the ball and the parameter $\kappa\geq 0$ specifying the relative
weighting of features and labels.  The variables $\gamma$ and $\lambda$
parameterize the selection of this worst-case distribution in response to the
model weights $\beta$.  Finally, $\Psi$ is the logistic loss kernel $t \mapsto
\log(e^t+e^{-t})$ and $L_\Psi=1$ is the corresponding Lipschitz constant.

We converted~\eqref{drslr} to the form~\eqref{mono1} with $n=2$, with the
operator $A_1$ enforcing the constraints, $A_2$ corresponding to the objective
term $c\|\beta\|_1$, and $B$ being the vector field corresponding to the gradients of the remaining elements of the
objective. More details of the formulation are provided in the appendix.

We compared our SPS method to some deterministic methods for solving
\eqref{drslr} for a collection of real datasets from the LIBSVM repository (released under the 3-clause BSD license)
 \cite{CC01a}.
In all the experiments, we set $\delta=\kappa=1$ and $c=10^{-3}$. We
implemented SPS with
$
\alpha_k = C_d k^{-0.51}
$
and
$
\rho_k = C_d k^{-0.25}
$
and called it \textit{SPS-decay}. We also implement SPS with the fixed stepsize given in
\eqref{step1} and called it \textit{SPS-fixed}.
We compared the method to deterministic projective splitting
\cite{johnstone2020projective}, Tseng's method
\cite{tseng2000modified,combettes2012primal}, and the
forward-reflected-backward method \cite{malitsky2020forward} (FRB). To the
best of our knowledge,  there is no stochastic method besides SPS capable of
solving \eqref{drslr} under standard assumptions. We show results for three
LIBSVM standard datasets: \textit{epsilon}\footnote{original data source
\url{http://largescale.ml.tu-berlin.de/instructions/}} ($m=4\cdot 10^5$,
$d=2000$), \textit{SUSY} \cite{baldi2014searching,Dua:2019} ($m=2\cdot
10^6$, $d=18$), and \textit{real-sim}\footnote{Original data source
\url{https://people.cs.umass.edu/~mccallum/data.html}} ($m=72,\!309$,
$d=20,\!958$). For SPS-fixed, we tuned $C_f$, arriving at $C_f=1$ for
epsilon and real-sim, and  $C_f=5$ for SUSY.
For SPS-decay, we tune $C_d$ arriving at $C_d=1$ for epsilon and SUSY, and $C_d=0.5$ for real-sim.
For SPS, we use
a batchsize of $100$. All methods are initialized at the same random point.

To measure the progress of the algorithms, we used the ``approximation residual''
\begin{align}\label{defRk}
R_k &\triangleq
\textstyle{
\sumin \|z^k - x_i^k\|^2 + \big\| B(z^k) + \sumin y_i^k \big\|^2.
}
\end{align}
This measure is related to $O_k$ but does not involve the dual iterates
$w_i^k$.
As with $O_k$, having $R_k=0$ implies that $z^k$ solves \eqref{mono1}.
We use $R_k$ instead of $O_k$ because it is also possible to compute essentially the
same measure of convergence from the iterates of the other tested algorithms,
providing a fair comparison. The appendix provides the details of the derivation
of the residual measure from each algorithm and explores the relationship
between $R_k$ and $O_k$.


Figure \ref{fig} plots the approximation residual versus running time for all
five algorithms under consideration.  The computations were performed using
Python 3.8.3 and \texttt{numpy} on a 2019 MacBook Pro with a 2.4GHz 8-core Intel I9 processor and 32GB of RAM . Being a stochastic method, SPS-decay seems to outperform the
deterministic methods at obtaining a medium-accuracy solution quickly.
Overall,  SPS-decay outperforms SPS-fixed.

\begin{figure}
\centering
\begin{subfigure}{.33\textwidth}
  \centering
  \includegraphics[width=\linewidth]{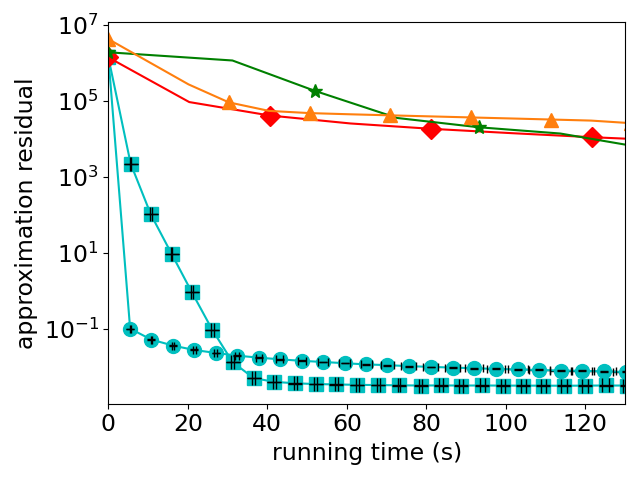}
  \label{fig:sub1}
\end{subfigure}%
\begin{subfigure}{.33\textwidth}
  \centering
  \includegraphics[width=\linewidth]{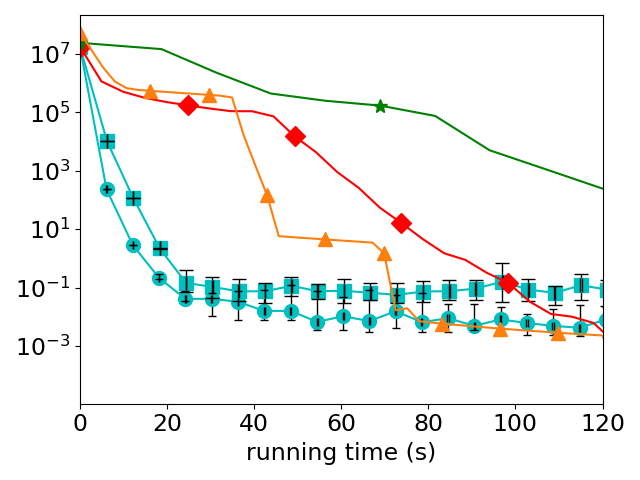}
  \label{fig:sub2}
\end{subfigure}
\begin{subfigure}{.33\textwidth}
  \centering
  \includegraphics[width=\linewidth]{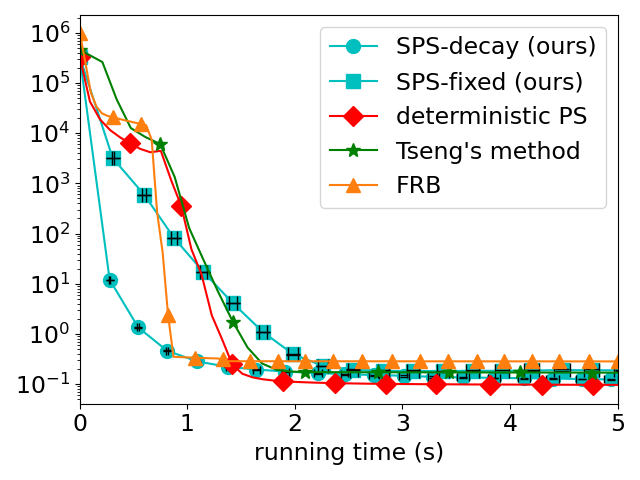}
  \label{fig:sub3}
\end{subfigure}
\vspace{-3ex}
\caption{Approximation residual versus running time for three LIBSVM benchmark
datasets, with the markers at 10-iteration intervals. Left: epsilon, middle:
SUSY, right: real-sim. Since SPS is stochastic, we plot the median results
over $10$ trials, with unit standard deviation horizontal error bars for the
running time and the vertical error bars displaying the min-to-max range of
the approximation residual. }
\label{fig}
\end{figure}

\section{Conclusions and Future Work}

We have developed the first stochastic splitting method that can handle
min-max problems with multiple regularizers and constraints.  Going
forward, this development should make it possible to incorporate regularizers
and constraints into adversarial formulations trained from large datasets. We
have established almost-sure convergence of the iterates to a solution, proved
a convergence rate result, and demonstrated promising empirical performance on a
distributionally robust learning problem.

Recent versions of deterministic projective
splitting~\cite{combettes2016async,johnstone2020projective} allow for
asynchronous and incremental operation, meaning that not all operators need to
be activated at every iteration, with some calculations proceeding with stale
inputs. Such characteristics make projective splitting well-suited to
distributed implementations. Many of our SPS results may be extended to allow
for these variations, but we leave those extensions to future work.

\section{Broader Impact} \label{secBI}
This work does not present any foreseeable societal consequence.

\bibliographystyle{spmpsci}
\bibliography{refs}

\newcounter{includeChecklist}
\setcounter{includeChecklist}{0}

\ifnum\value{includeChecklist}=1
{
\section*{Checklist}

\begin{enumerate}

\item For all authors...
\begin{enumerate}
  \item Do the main claims made in the abstract and introduction accurately reflect the paper's contributions and scope?
	\answerYes{}.
  \item Did you describe the limitations of your work?
	\answerYes{}
	See the second paragraph of Section \ref{secProposed} on memory complexity. See the discussion after Theorem \ref{thmConvR} on convergence rates. See Section \ref{secBackG} on ML applications of games for a discussion on nonconvexity.

  \item Did you discuss any potential negative societal impacts of your work?
\answerYes{} See Section \ref{secBI} (The work does not present any forseeable societal impact).
  \item Have you read the ethics review guidelines and ensured that your paper conforms to them?
    \answerYes{}
\end{enumerate}

\item If you are including theoretical results...
\begin{enumerate}
  \item Did you state the full set of assumptions of all theoretical results?
     \answerYes{}
	\item Did you include complete proofs of all theoretical results?
    \answerYes{} Proofs are in the appendix (supplementary material)
\end{enumerate}

\item If you ran experiments...
\begin{enumerate}
  \item Did you include the code, data, and instructions needed to reproduce the main experimental results (either in the supplemental material or as a URL)?
    \answerYes{} Code is in the supplementary material ZIP file.
  \item Did you specify all the training details (e.g., data splits, hyperparameters, how they were chosen)?
    \answerYes{} In both Section \ref{secExps}  and the appendix.
	\item Did you report error bars (e.g., with respect to the random seed after running experiments multiple times)?
    \answerYes{} See Figure \ref{fig}.
	\item Did you include the total amount of compute and the type of resources used (e.g., type of GPUs, internal cluster, or cloud provider)?
    \answerYes{} See Section \ref{secExps}.
\end{enumerate}

\item If you are using existing assets (e.g., code, data, models) or curating/releasing new assets...
\begin{enumerate}
  \item If your work uses existing assets, did you cite the creators?
    \answerYes{} we cite LIBSVM along with the specific sources of the data.
  \item Did you mention the license of the assets?
    \answerYes{} See Section \ref{secExps} for LIBSVM license
  \item Did you include any new assets either in the supplemental material or as a URL?
    \answerNA{} We did not include any new assets
  \item Did you discuss whether and how consent was obtained from people whose data you're using/curating?
    \answerNA{} This is not relevant to the LIBSVM datasets we used
  \item Did you discuss whether the data you are using/curating contains personally identifiable information or offensive content?
    \answerNA{} This is not relevant to the LIBSVM datasets we used
\end{enumerate}

\item If you used crowdsourcing or conducted research with human subjects...
\begin{enumerate}
  \item Did you include the full text of instructions given to participants and screenshots, if applicable?
    \answerNA{}
  \item Did you describe any potential participant risks, with links to Institutional Review Board (IRB) approvals, if applicable?
    \answerNA{}
  \item Did you include the estimated hourly wage paid to participants and the total amount spent on participant compensation?
    \answerNA{}
\end{enumerate}

\end{enumerate}
}
\fi


\appendix

\section{Proof of Theorem \ref{thmMain}}

\subsection{Stochastic Quasi-Fejer Monotonicity}
The key to the analysis is showing that the algorithm satisfies \textit{Stochastic Quasi-Fejer Monotonicity} \cite{combettes2015stochastic}.

\begin{lemma}[\cite{combettes2015stochastic}, Proposition 2.3]\label{lemQuasi}
Suppose
$p^k$ is a sequence of $\rR^d$-valued random variables defined on a probability space $(\Omega,\mbF,P)$.  Let $\mbF_k = \sigma(p^1,\ldots,p^k)$. Let $F$ be a nonempty, closed subset of $\rR^d$. Suppose that, for every $p\in F$, there exists $\chi^k(p)\geq 0,\eta^k(p)\geq 0,\nu^k(p) \geq 0$ such that $\sumk\chi^k(p)<\infty$, $\sumk\eta^k(p)<\infty$
and
\begin{align*}
(\forall k\in\mathbb{N})\quad \E[\|p^{k+1}-p\|^2|\mbF_k]
\leq
(1+\chi^k(p))\|p^k - p\|^2
-\nu^k(p)
+\eta^k(p).
\end{align*}
Then the following hold:
\begin{enumerate}
\item $(\forall p\in F):\quad \sumk\nu^k(p)<\infty$ a.s.
\item $p^k$ is  bounded a.s.
\item There exists $\tilde{\Omega}$ such that $P[\tilde{\Omega}]=1$ and
$\big\{\|p^k(\omega) - p\|\big\}$ converges for every $\omega\in\tilde{\Omega}$ and $p\in F$.
\end{enumerate}
\end{lemma}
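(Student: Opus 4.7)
The plan is to reduce the statement to the classical Robbins--Siegmund almost-sure supermartingale convergence theorem, applied pointwise in $p$, and then to bootstrap from countably many $p$'s to all of $F$ via a separability argument.

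First I would fix an arbitrary $p\in F$ and set $a_k=\|p^k-p\|^2$, $b_k=\chi^k(p)$, $c_k=\nu^k(p)$, $d_k=\eta^k(p)$. These are all nonnegative and adapted to $\mathbb{F}_k$ (the $\chi^k,\eta^k,\nu^k$ are implicitly measurable, which I would state up front). The hypothesis then reads exactly $\E[a_{k+1}\mid\mathbb{F}_k]\leq(1+b_k)a_k+d_k-c_k$ with $\sum_k b_k<\infty$ and $\sum_k d_k<\infty$ a.s. The Robbins--Siegmund theorem therefore yields, on a full-probability event $\Omega_p$, both that $a_k=\|p^k-p\|^2$ converges to a finite limit and that $\sum_k\nu^k(p)<\infty$. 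This already gives item~1 and, since on $\Omega_p$ the sequence $\|p^k-p\|$ is bounded and $p\in\rR^d$ is fixed, it gives item~2 as well.

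The subtle step, and the main obstacle, is item~3: the full-measure set $\Omega_p$ on which $\|p^k-p\|$ converges depends on $p$, and an uncountable intersection of such sets need not have probability one. To circumvent this I would invoke separability of $F\subseteq\rR^d$: choose a countable dense set $D\subseteq F$ and define $\tilde{\Omega}=\bigcap_{p\in D}\Omega_p$, which still satisfies $P[\tilde{\Omega}]=1$ as a countable intersection. On $\tilde{\Omega}$, the limit $\ell(q)\triangleq\lim_k\|p^k-q\|$ exists for every $q\in D$ simultaneously. To extend convergence to an arbitrary $p\in F$, fix $\omega\in\tilde{\Omega}$ and use the reverse triangle inequality: for any $q\in D$,
\[
\bigl|\|p^k(\omega)-p\|-\|p^k(\omega)-q\|\bigr|\leq\|p-q\|,
\]
so $\limsup_k\|p^k(\omega)-p\|-\liminf_k\|p^k(\omega)-p\|\leq 2\|p-q\|$. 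Taking $q\in D$ with $\|p-q\|$ arbitrarily small forces $\limsup=\liminf$, establishing the desired convergence of $\|p^k(\omega)-p\|$ for every $p\in F$.

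The argument is otherwise routine; no further properties of $F$ beyond being a closed (hence separable) subset of $\rR^d$ are needed, and no continuity of $p\mapsto\chi^k(p),\eta^k(p),\nu^k(p)$ is required because Robbins--Siegmund is applied at each $p$ independently. I would state the measurability of the auxiliary sequences explicitly (e.g., by assumption they are $\mathbb{F}_k$-measurable functions of the random iterates and problem data), since this is a prerequisite for invoking Robbins--Siegmund but is usually left tacit.
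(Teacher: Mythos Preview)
The paper does not supply its own proof of this lemma; it is quoted verbatim as Proposition~2.3 of \cite{combettes2015stochastic} and used as a black box in the analysis of SPS. So there is no in-paper argument to compare against.

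That said, your proposal is correct and is essentially the standard route to this result: apply the Robbins--Siegmund almost-sure supermartingale theorem at each fixed $p\in F$ to obtain items~1 and~2, and then upgrade the $p$-dependent full-measure set to a single $\tilde{\Omega}$ by passing through a countable dense subset of $F$ and the reverse triangle inequality. This is precisely the mechanism used in the cited source. Two small remarks: (i) in the lemma as stated the sequences $\chi^k(p)$ and $\eta^k(p)$ are deterministic, so only $\nu^k(p)$ needs $\mathbb{F}_k$-measurability for Robbins--Siegmund to apply---you might tighten your measurability caveat accordingly; (ii) closedness of $F$ is not actually used in your argument (only separability, which holds for any subset of $\rR^d$), and indeed the original result in \cite{combettes2015stochastic} does not require $F$ closed either.
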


\subsection{Important Recursion for SPS}

The following lemma summarizes the key recursion satisfied by Algorithm \ref{algSPS}, to which we will apply Lemma \ref{lemQuasi}.
Recall that $L$ is the Lipschitz constant of $B$. 
\begin{lemma}
For Algorithm \ref{algSPS}, suppose \eqref{unbiasedAss}--\eqref{noiseBound2} hold
and
\begin{align}\label{upperStepByL}
\rho_k &\leq \orho < {1}/{L}.
\end{align}
Let
\begin{align*}
T_k\triangleq
\frac{\tau}{\orho}\sumin\|y_i^k - w_i^k\|^2 + \frac{1}{\orho\tau}\sumin\|z^k - x_i^k\|^2
+
2(1-\orho L)\| B (z^k) - w_{n+1}^k\|^2
\end{align*}
then for all $p^*\in\cS$, with probability one
\begin{align}
	\E[\|p^{k+1} - p^*\|^2 |\mbF_k]
	&\leq
	(1+ C_1\alpha_k^2+ C_3\alpha_k\rho_k^2)\|p^k - p^*\|^2 - \alpha_k \rho_k T_k
	+ C_2 \alpha_k^2 + C_4\alpha_k\rho_k^2\label{finalStochquasLem}
\end{align}
where $C_1,\ldots,C_4$ are nonegative constants defined in \eqref{defC1}, \eqref{defC2}, \eqref{defC3},  and \eqref{defC4} below, respectively.
\label{lemKeyRecurse}
\end{lemma}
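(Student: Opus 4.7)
The plan is to take $\E[\,\cdot\mid\mbF_k]$ of identity~\eqref{eqStart}, yielding
$\E[\|p^{k+1}-p^*\|^2\mid\mbF_k] = \|p^k-p^*\|^2 - 2\alpha_k\,\E[\varphi_k(p^k)-\varphi_k(p^*)\mid\mbF_k] + \alpha_k^2\,\E[\|\nabla\varphi_k\|^2\mid\mbF_k]$,
and to reduce the lemma to two subclaims: (a) a lower bound
$\E[\varphi_k(p^k)-\varphi_k(p^*)\mid\mbF_k] \geq \tfrac{\rho_k}{2}T_k - \tfrac{C_3\rho_k^2}{2}\|p^k-p^*\|^2 - \tfrac{C_4\rho_k^2}{2}$
whose multiplication by $-2\alpha_k$ produces the $-\alpha_k\rho_k T_k$ piece together with the $\alpha_k\rho_k^2$-scaled error contributions, and (b) an upper bound
$\E[\|\nabla\varphi_k\|^2\mid\mbF_k] \leq C_1\|p^k-p^*\|^2 + C_2$
whose multiplication by $\alpha_k^2$ supplies the $\alpha_k^2$-scaled contributions.

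For (a), I would decompose $\varphi_k$ by operator index. For $i=1,\dots,n$, the resolvent update gives $z^k - x_i^k = \tau(y_i^k-w_i^k)$ with $y_i^k\in A_i(x_i^k)$; pairing this with $w_i^*\in A_i(z^*)$ and invoking monotonicity of $A_i$ yields
$\langle z^k-x_i^k,y_i^k-w_i^k\rangle - \langle z^*-x_i^k,y_i^k-w_i^*\rangle \geq \tau\|y_i^k-w_i^k\|^2$,
which I would split symmetrically as $\tfrac{\tau}{2}\|y_i^k-w_i^k\|^2 + \tfrac{1}{2\tau}\|z^k-x_i^k\|^2$ using the above identity. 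Applying $\rho_k\leq\orho$ then reproduces exactly the first two blocks of $\tfrac{\rho_k}{2}T_k$. For $i=n+1$, set $u^k=B(z^k)-w_{n+1}^k$ and $v^k=B(x_{n+1}^k)-B(z^k)$; expanding the $(n{+}1)$-summand and invoking monotonicity of $B$ at $z^*$ reduces the key quantity to $\rho_k\|u^k\|^2 + \rho_k\langle u^k,v^k\rangle$ plus noise cross-terms, and the Lipschitz bound $\|v^k\|\leq L\rho_k\|u^k+\epsilon^k\|$ combined with $\rho_k\leq\orho<1/L$ extracts precisely the coefficient $(1-\orho L)$ appearing in $T_k$.

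For (b), I would write $\|\nabla\varphi_k\|^2 = \|\suminp y_i^k\|^2 + \suminp\|x_j^k - \bar{x}^k\|^2$ with $\bar{x}^k = \tfrac{1}{n+1}\suminp x_i^k$, then use $\suminp w_i^*=0$ to replace $\suminp y_i^k = \suminp(y_i^k - w_i^*)$ and the projection identity $\|x_j^k-\bar{x}^k\|^2 \leq \|x_j^k-z^*\|^2$. For $i\leq n$, firm nonexpansiveness of $J_{\tau A_i}$ controls $\|y_i^k-w_i^*\|$ and $\|x_i^k-z^*\|$ linearly in $\|z^k-z^*\|$ and $\|w_i^k-w_i^*\|$; for $i=n+1$, Lipschitz continuity of $B$ reduces matters to bounding $\|x_{n+1}^k-z^*\|^2$, which decomposes by the triangle inequality into contributions from $\|z^k-z^*\|^2$, $\rho_k^2\|u^k\|^2$, and $\rho_k^2\|\epsilon^k\|^2$. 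The noise variance hypotheses \eqref{noiseBound1}--\eqref{noiseBound2}, together with $\|B(z^k)\|^2 \leq 2L^2\|z^k-z^*\|^2 + 2\|B(z^*)\|^2$ and a similar expansion of $\|B(x_{n+1}^k)\|^2$, then convert every residual $\|\epsilon^k\|^2$ and $\|e^k\|^2$ into an affine function of $\|p^k-p^*\|^2$, giving the desired $C_1,C_2$.

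The principal obstacle is the cross-terms $\E[\langle\epsilon^k,v^k\rangle\mid\mbF_k]$ and $\E[\langle\epsilon^k,e^k\rangle\mid\mbF_k]$: because $e^k$ and $\epsilon^k$ are allowed to be dependent, and because $v^k$ is itself a (Lipschitz) function of $\epsilon^k$, these do not vanish by the tower property. My remedy is Cauchy--Schwarz, bounding them by $\sqrt{\E[\|\epsilon^k\|^2\mid\mbF_k]\cdot\E[\|v^k\|^2\mid\mbF_k]}$ and the analogous product for $e^k$, and then applying \eqref{noiseBound1}--\eqref{noiseBound2}. Since $\|v^k\|$ carries a factor of $\rho_k$, these remainders are of order $\rho_k^2(1+\|p^k-p^*\|^2)$, which after multiplication by the outer $\alpha_k$ land in exactly the $C_3\alpha_k\rho_k^2$ and $C_4\alpha_k\rho_k^2$ slots, while the zero-mean inner products involving only one of $\epsilon^k$ or $e^k$ paired with an $\mbF_k$-measurable vector vanish by unbiasedness~\eqref{unbiasedAss}. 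Collecting (a) and (b) and reading off $C_1,\dots,C_4$ then yields~\eqref{finalStochquasLem}.
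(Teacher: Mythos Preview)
Your overall architecture --- take conditional expectations in \eqref{eqStart}, prove a lower bound on the $\varphi_k$-gap and an upper bound on $\E[\|\nabla\varphi_k\|^2\mid\mbF_k]$ --- matches the paper, and your treatment of the resolvent terms $i=1,\dots,n$ and of part (b) is essentially correct. However, there is a genuine gap in your handling of the $(n{+}1)$-summand.

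The problem is the cross term $\rho_k\langle\epsilon^k,e^k\rangle$ that your expansion of $\langle z^k-x_{n+1}^k,\,y_{n+1}^k-w_{n+1}^k\rangle=\rho_k\langle u^k+\epsilon^k,\,u^k+v^k+e^k\rangle$ produces. You claim that after Cauchy--Schwarz this term is $O(\rho_k^2)$ because ``$\|v^k\|$ carries a factor of $\rho_k$'', but that reasoning only applies to $\langle\epsilon^k,v^k\rangle$; the vector $e^k$ carries no such factor. Cauchy--Schwarz on $\rho_k\langle\epsilon^k,e^k\rangle$ yields only $\rho_k\sqrt{\E[\|\epsilon^k\|^2\mid\mbF_k]\,\E[\|e^k\|^2\mid\mbF_k]}$, which under \eqref{noiseBound1}--\eqref{noiseBound2} is of order $\rho_k(1+\|p^k-p^*\|^2)$, not $\rho_k^2$. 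After the outer multiplication by $-2\alpha_k$ this becomes an $\alpha_k\rho_k$-sized contribution, which cannot be absorbed into either the $C_1\alpha_k^2$, $C_3\alpha_k\rho_k^2$, $C_2\alpha_k^2$, or $C_4\alpha_k\rho_k^2$ slots of \eqref{finalStochquasLem}. Since the stepsize assumption \eqref{stepRuleSumInf} requires $\sum_k\alpha_k\rho_k=\infty$, such a term would destroy the quasi-Fej\'er recursion downstream.

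The paper avoids this term altogether by \emph{not} expanding $z^k-x_{n+1}^k$ when it is paired with $e^k$. Instead, it keeps $\langle z^k-x_{n+1}^k,\,e^k\rangle$ intact in $\varphi_{n+1,k}(p^k)$ and observes that $-\varphi_{n+1,k}(p^*)$ contributes, via monotonicity of $B$, the companion term $\langle x_{n+1}^k-z^*,\,e^k\rangle$. These combine to $\langle z^k-z^*,\,e^k\rangle$, whose conditional expectation vanishes by \eqref{unbiasedAss} because $z^k-z^*$ is $\mbF_k$-measurable. Thus the dependence between $\epsilon^k$ and $e^k$ is never an issue: the only surviving noise term in the $\varphi_{n+1,k}$-gap is $-\rho_k^2L\,\E[\|\epsilon^k\|^2\mid\mbF_k]$, which lands squarely in the $\alpha_k\rho_k^2$ slot. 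If you reroute your argument through this cancellation, the rest of your plan goes through.
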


Note that $T_k$ is a scaled version of the approximation residual $O_k$ defined
in~\eqref{Okdef}.

We proceed to first prove Lemma \ref{lemKeyRecurse} and then exploit the
implications of Lemma \ref{lemQuasi}.  Referring to~\eqref{noiseBound1}
and~\eqref{noiseBound2}, let $N\triangleq \max_{j=1,\ldots,4} N_j$. To
simplify the constants, we will use $N$ in place of $N_j$ for the noise
variance bounds given in \eqref{noiseBound1}-\eqref{noiseBound2}.

\subsection{Upper Bounding the Gradient}
\label{secUpperGrad}
Throughout the analysis, we fix some $p^*=(z^*,w_1^*\ldots,w_{n+1}^*)\in\cS$.
All statements are with probability one (almost surely), but for brevity we
will omit this unless it needs to be emphasized.

In this section, we derive appropriate upper bounds for $\|\nabla\varphi_k\|^2$ to use in \eqref{eqStart}.
We begin with $\nabla_z\varphi_k$:
\begin{align*}
	\|\nabla_z\varphi_k\|^2
	=
	\Big\|\suminp y_i^k\Big\|^2
	\leq
	2\|y_{n+1}^k\|^2 +2\Big\|\sumin y_i^k\Big\|^2
	&=
	2\big\|B(x_{n+1}^k)+e^k\big\|^2 +2\Big\|\sumin y_i^k\Big\|^2
	\\
	&\leq
   4\|B(x_{n+1}^k)\|^2 +2\Big\|\sumin y_i^k\Big\|^2
	+4\|e^k\|^2.
\end{align*}
Now next take expectations with respect to $\mbF_k$ and $\mbE_k$, and use the
bound on the variance of the noise in \eqref{noiseBound2}, obtaining
\begin{align*}
\E\left[\|\nabla_z\varphi_k\|^2|\mbF_k,\mbE_k\right]
&\leq
\E\left[
   4\|B(x_{n+1}^k)\|^2 +2\Big\|\sumin y_i^k\Big\|^2
	+4\|e^k\|^2
\; \Big| \; \mbF_k,\mbE_k\right]
\nonumber\\
&\leq
   4(N+1)\|B(x_{n+1}^k)\|^2 +2\Big\|\sumin y_i^k\Big\|^2
	+4N,
\end{align*}
where we have used that $y_i^k$ is $\mbF_k$-measurable for $i=1,\ldots,n$.
Thus, taking  expectations over $\mbE_k$ conditioned on $\mbF_k$ yields
\begin{align}\label{eq0o}
\E\left[\|\nabla_z\varphi_k\|^2|\mbF_k\right]
\leq
   4(N+1)\E[\|B(x_{n+1}^k)\|^2|\mbF_k] +2\Big\|\sumin y_i^k\Big\|^2
	+4N.
\end{align}

We will now bound the two terms on the right side of \eqref{eq0o}.
\subsubsection{First Term in \eqref{eq0o}}
First, note that
\begin{align}
\|B(z^k)\|^2
&=
\|B(z^k)- B(z^*) +  B(z^*))\|^2
\nonumber\\
&\leq
2\|B(z^k)- B(z^*)\|^2 +2\|B(z^*)\|^2
\nonumber\\
&\leq
2 L^2\|z^k-z^*\|^2 +2\| B(z^*)\|^2
\nonumber\\\label{normGradz}
&\leq
2 L^2\|p^k-p^*\|^2 +2\|B(z^*)\|^2.
\end{align}

Now, returning to the first term on the right of \eqref{eq0o}, we have
\begin{align}
	\|B(x_{n+1}^k)\|^2
	&=
	\| B(z^k) + B(x_{n+1}^k) -B(z^k) \|^2
	\nonumber\\
	&\leq
	2\| B(z^k)\|^2 + 2\|B(x_{n+1}^k) - B(z^k) \|^2
	\nonumber\\
	&\leq
	2\| B(z^k)\|^2 + 2L^2\|x_{n+1}^k - z^k \|^2
	\nonumber\\\label{eqAbove}
	&\leq
	4 L^2\|p^k-p^*\|^2 +4\|B(z^*)\|^2+ 2L^2\|x_{n+1}^k - z^k \|^2
\end{align}
where we have used \eqref{normGradz} to obtain \eqref{eqAbove}.

For the third term in \eqref{eqAbove}, we have from the calculation on
line~\ref{xupdateLip} of the algorithm that
\begin{align*}
	x_{n+1}^k - z^k &= -\rho_k(r^k - w_{n+1}^k)
	=
	-\rho_k( B(z^k)+\epsilon^k - w_{n+1}^k),
\end{align*}
and therefore
\begin{align*}
	\|x_{n+1}^k - z^k \|^2
	&=
	\rho_k^2\| B(z^k)+\epsilon^k - w_{n+1}^k\|^2
	\nonumber\\
	&\leq
		\orho^2\| B(z^k)+\epsilon^k - w_{n+1}^k\|^2
	\nonumber\\
	&\leq
	3\orho^2(\| B(z^k)\|^2+\|\epsilon^k\|^2 + \|w_{n+1}^k\|^2).
\end{align*}
We next take expectations conditioned on $\mbF_k$ and use the noise variance
bound \eqref{noiseBound1} to obtain
\begin{align*}
\E\big[\|x_{n+1}^k - z^k \|^2 \,|\, \mbF_k\big]
&\leq
\E\!\left[
	3\orho^2\big(\| B(z^k)\|^2+\|\epsilon^k\|^2 + \|w_{n+1}^k\|^2\big)
\,|\,\mbF_k\right]
\\
&\leq
	3\orho^2\big((N+1)\| B(z^k)\|^2 + \|w_{n+1}^k\|^2 + N\big).
\end{align*}
Therefore
\begin{align}
\E\big[\|x_{n+1}^k - z^k \|^2\,|\,\mbF_k\big]
	&\leq
	6\orho^2\big((N+1)\| B(z^k)\|^2 + \|w_{n+1}^k-w_{n+1}^*\|^2+\|w_{n+1}^*\|^2\big)
	+3\orho^2 N
	\nonumber\\
  &=
  6\orho^2\Big(
  2 (N+1)L^2\|p^k-p^*\|^2 +2(N+1)\|B(z^*)\|^2
  \nonumber\\
  &\qquad\qquad\qquad\qquad\qquad
  + \|w_{n+1}^k-w_{n+1}^*\|^2+\|B(z^*)\|^2
  \Big)
  +3\orho^2 N
  \nonumber\\
	&\leq
	6\orho^2\big(
	2(N+1)L^2\|p^k - p^*\|^2
	+ \|w_{n+1}^k-w_{n+1}^*\|^2\big)
		\nonumber\\
	&\quad\quad
	+18\orho^2(N+1)\|B(z^*)\|^2
		+3\orho^2 N
	\nonumber\\
	&\leq
		18\orho^2(N+1)\big(
	(L^2 +1)\|p^k - p^*\|^2
+\| B(z^*)\|^2\big)
	+3\orho^2 N
			\label{eqLast}
\end{align}
where in the equality uses \eqref{normGradz} and $w_{n+1}^*=B(z^*)$.
Combining \eqref{eqAbove} and \eqref{eqLast}, we arrive at
\begin{align}
		\E\left[\left.\left\|B(x_{n+1}^k)\right\|^2\,\right|\mbF_k\right]
		&\leq
		4 L^2
		\big[1+9\orho^2 (L^2 +1)(N+1)\big]\|p^k-p^*\|^2
			\nonumber\\
		&\quad
		+
		4\big(1+9\orho^2L^2(N+1)\big)\| B(z^*)\|^2
		+6\orho^2L^2 N.
		\label{eqAlso}
\end{align}

\subsubsection{Second term in \eqref{eq0o}}
For $i=1\ldots,n$, line~\ref{yupdate} of the algorithm may be rearranged into
$
y_i^k = \tau^{-1}(z^k- x_i^k) + w_i^k,
$
so
\begin{align}
\Big\|\sumin y_i^k\Big\|^2
&=
\Big\| \sumin(\tau^{-1}(z^k- x_i^k) + w_i^k)\Big\|^2
\nonumber\\
&\leq
2\Big\|\tau^{-1} \sumin(z^k- x_i^k)\Big\|^2 +2\Big\| \sumin w_i^k\Big\|^2
\nonumber\\
&\leq
2n\tau^{-2}\sumin\|z^k - x_i^k\|^2
+
2\Big\|\sumin w_i^k\Big\|^2
\nonumber\\
&\leq
4n^2\tau^{-2}\|z^k - z^*\|^2 + 4n\tau^{-2}\sumin\|z^* -  x_i^k\|^2
+
4n\sumin\|w_i^k - w_i^*\|^2 + 4\Big\|\sumin w_i^*\Big\|^2
\nonumber\\\label{eqBzero}
&\leq
4n^2(\tau^{-2}+1)\|p^k - p^*\|^2 + 4n\tau^{-2}\sumin\|z^* -  x_i^k\|^2
+ 4\Big\|\sumin w_i^*\Big\|^2.
\end{align}
By the definition of the solution set $\cS$ in \eqref{Sdef}, $w_i^*\in
A_i(z^*)$, so
$
z^* + \tau w_i^* \in (I+\tau A_i)(z^*),
$
and  since the resolvent is single-valued~\cite[Cor. 23.9]{bauschke2011convex} we therefore obtain
\begin{align*}
z^*  =
(I+\tau A_i)^{-1}(I+\tau A_i)(z^*)
=
J_{\tau A_i}(z^* + \tau w_i^*).
\end{align*}
From lines~\ref{lineXYone} and~\ref{xupdate} of the algorithm, we also have
$
x_i^k = J_{\tau A_i}(z^k +\tau w_i^k)
$
for $i=1\ldots,n$. Thus, using the nonexpansiveness of the resolvent~\cite[Def.
4.1 and Cor. 23.9]{bauschke2011convex}, we have
\begin{align}
\sumin
\|z^*-x_i^k \|^2
&=
\sumin
\big\|J_{\tau A_i}(z^k +\tau w_i^k) - J_{\tau A_i}(z^* + \tau w_i^*)\big\|^2
\nonumber\\
&\leq
\sumin
\|z^k +\tau w_i^k - z^* - \tau w_i^*\|^2
\nonumber\\
&=
\sumin
\|z^k - z^*+ \tau (w_i^k - w_i^*)\|^2
\nonumber\\
&\leq
2n
\|z^k - z^*\|^2+ 2\tau^2\sumin\| w_i^k - w_i^*\|^2
\nonumber\\\label{eqinBone}
&\leq
2(n+\tau^2)\|p^k - p^*\|^2.
\end{align}
Combining \eqref{eqBzero} and \eqref{eqinBone} yields
\begin{align}
\Big\|\sumin y_i^k\Big\|^2
\leq
12 n^2 \tau^{-2}(n+\tau^2)
\|p^k - p^*\|^2
+
4\Big\|\sumin w_i^*\Big\|^2.
\label{eqBtwo}
\end{align}

Combining  \eqref{eqAlso} and \eqref{eqBtwo} with \eqref{eq0o} yields
\begin{align}
\E\big[\|\nabla_z\varphi_k\|^2\,|\,\mbF_k\big]&\leq
24\left[
		(1+9\orho^2) (L^2 +1)^2(N+1)^2
+
n^2 \tau^{-2}(n+\tau^2)
\right]
		\|p^k-p^*\|^2
			\nonumber\\
		&\qquad
		+
		16(N+1)\big(1+9\orho^2L^2(N+1)\big)\| B(z^*)\|^2
		+
8\Big\|\sumin w_i^*\Big\|^2
			\nonumber\\
		&\qquad
		+24\orho^2L^2 (N+1)N
+ 4N.
		\label{finalGradz}
\end{align}

\subsubsection{Dual Gradient Norm}
Considering that $\nabla\varphi_k$ is taken with respect to the subspace
$\cP$, the gradients with respect to the dual variables are (see for example
\cite{eckstein2009general}), for each $i=1,\ldots,n+1$,

\begin{align*}
	\|\nabla_{w_i}\varphi_k\|^2
	=
	\Big\|x_i^k - \frac{1}{n+1}\sum_{j=1}^{n+1} x_j^k\Big\|^2
	&=
	\Big\|\frac{1}{n+1}\sumjnp(x_i^k - x_j^k)\Big\|^2
	\\
	&\leq
	\sumjnp\|x_i^k - x_j^k\|^2
	\\
	&\leq
	2\sumjnp\left(\|x_i^k - z^k\|^2+\|z^k - x_j^k\|^2\right)
\end{align*}
Summing this inequality for $i=1,\ldots,n+1$ and collecting terms yields
\begin{align*}
\sum_{i=1}^{n+1}\|\nabla_{w_i}\varphi_k\|^2
&\leq
4(n+1)\sum_{i=1}^{n+1} \|x_i^k - z^k\|^2,
\end{align*}
so taking expectations conditioned on $\cF_k$ produces
\begin{align}
	\sum_{i=1}^{n+1}	\E[\|\nabla_{w_i}\varphi_k\|^2\,|\,\mbF_k]
	&\leq
	4(n+1)\suminp\E[\|x_i^k - z^k\|^2\,|\,\mbF_k]
	\nonumber\\
	&\leq
	4(n+1) \E[\|x_{n+1}^k - z^k\|^2\,|\,\mbF_k] +4(n+1)\sumin\E[\|x_i^k - z^k\|^2\,|\,\mbF_k]
	\nonumber\\
	&\leq
4(n+1) \E[\|x_{n+1}^k - z^k\|^2\,|\,\mbF_k]
\nonumber\\
&\qquad
+ 8(n+1)\sumin\E[\|x_i^k - z^*\|^2\,|\,\mbF_k] + 8(n+1)^2 \|z^k-z^*\|^2
	\nonumber\\
		&\leq
4(n+1) \E[\|x_{n+1}^k - z^k\|^2|\mbF_k]
\nonumber\\
&\qquad +8(n+1)\sumin\E[\|x_i^k - z^*\|^2|\mbF_k]
+8(n+1)^2 \|p^k-p^*\|^2
\nonumber	\\
	&\leq
	8(n+1)\big[3n+2\tau^2+1 + 9\orho^2(L^2+1)(N+1)\big]\|p^k - p^*\|^2
		\nonumber\\\label{dualGrad}
	&\quad\quad
	+72\orho^2(n+1)(N+1)\| B(z^*)\|^2
	+ 12\orho^2 (n+1)N,
\end{align}
where the final inequality employs \eqref{eqLast} and \eqref{eqinBone}.

All told, using \eqref{finalGradz} and \eqref{dualGrad} and simplifying the constants,
one obtains
\begin{align}
	\E[\|\nabla\varphi_k\|^2\,|\,\mbF_k]
	&= \E[\|\nabla_z\varphi_k\|^2\,|\,\mbF_k]
	+
	\suminp\E[\|\nabla_{w_i}\varphi_k\|^2 |\mbF_k]
	\nonumber\\
	&\leq C_1\|p^k - p^*\|^2 + C_2,
	\label{eqGradUpper}
\end{align}
where
\begin{align}
C_1
&=
24
		(1+10\orho^2)(n+1)(L^2+1)^2(N+1)^2
	\nonumber\\
	&\qquad
+
8(n+1)
\left(
2\tau^2
+
6(n+1) + 1
+
3(n+1)^2\tau^{-2}
\right)\label{defC1}
\end{align}
and
\begin{align}
C_2
&=
16(N+1)
\left[
1 +  4\orho^2(n+1) + 9\orho^2 L^2(N+1)
\right]
\|B(z^*)\|^2
+
8\|\sumin w_i^*\|^2
\nonumber\\
&
\qquad
+
12\orho^2 N
(
2 L^2(N+1) + n+1
)
+ 4N.
\label{defC2}
\end{align}

\subsection{Lower Bound for $\varphi_k$-gap}
Recalling \eqref{eqStart}, that is,
\begin{align*}
	\|p^{k+1} - p^*\|^2
	&=
	\|p^k - p^*\|^2 - 2\alpha_k(\varphi_k(p^k)-\varphi_k(p^*))+ \alpha_k^2\|\nabla\varphi_k\|^2.
\end{align*}
We may use the gradient bound from \eqref{eqGradUpper} to obtain
\begin{align}
\label{eqQuasi}
	\E[\|p^{k+1} - p^*\|^2 \,|\, \mbF_k]
	&\leq
	(1+C_1\alpha_k^2 )\|p^k - p^*\|^2 - 2\alpha_k\E[\varphi_k(p^k)-\varphi_k(p^*)\,|\,\mbF_k]
	+ C_2\alpha_k^2.
\end{align}
We now focus on finding a lower bound for the term
$\E[\varphi_k(p^k)-\varphi_k(p^*)\,|\,\mbF_k]$, which we call the
``$\varphi_k$-gap''.
Recall that for $p=(z,w_1,\ldots,w_{n+1})$,
\begin{align*}
	\varphi_k(p)
	&=
	\suminp\langle z - x_i^k,y_i^k - w_i\rangle .
\end{align*}
For each $i=1,\ldots,n+1$, define
$
\varphi_{i,k}(p)
\triangleq
\langle z - x_i^k,y_i^k - w_i\rangle.
$
We will call $\E[\varphi_{i,k}(p^k)-\varphi_{i,k}(p^*)\,|\,\mbF_k]$
the ``$\varphi_{i,k}$-gap''.  Note that
$\varphi_k(p) = \suminp\varphi_{i,k}(p)$.

\subsection{Lower Bound for $\varphi_{i,k}$-gap over $i=1,\ldots,n$}\label{secPhiiGap}
For $i=1,\ldots,n$, we have from line~\ref{yupdate} of the algorithm that
\begin{align*}
z^k - x_i^k = \tau(y_i^k - w_i^k).
\end{align*}
Since
$
\varphi_{i,k}(p^k)=\langle z^k - x_i^k,y_i^k - w_i^k\rangle,
$
one may conclude that for $i=1,\ldots,n$,
\begin{align*}
\varphi_{i,k}(p^k)=
\frac{\tau}{2}\|y_i^k - w_i^k\|^2 + \frac{1}{2\tau}\|z^k - x_i^k\|^2.
\end{align*}

On the other hand, for $p^*\in\cS$ and $i=1,\ldots,n$, one also has
\begin{align}
-\varphi_{i,k}(p^*)
=
\langle z^* - x_i^k,w_i^* - y_i^k\rangle
\geq 0\label{useMono1}
\end{align}
by the monotonicity of $A_i$.  Therefore, for $i=1,\ldots,n$, it holds that
\begin{align*}
\varphi_{i,k}(p^k) - \varphi_{i,k}(p^*)
\geq
\frac{\tau}{2}\|y_i^k - w_i^k\|^2 + \frac{1}{2\tau}\|z^k - x_i^k\|^2,
\end{align*}
and taking expectations conditioned on $\mbF_k$ leads to
\begin{align}
\E[\varphi_{i,k}(p^k) - \varphi_{i,k}(p^*)\,|\,\mbF_k]
\geq
\frac{\tau}{2}\|y_i^k - w_i^k\|^2+ \frac{1}{2\tau}\|z^k - x_i^k\|^2
\label{varphigapM}
\end{align}
where we have used that $x_i^k$ and $y_i^k$ are both $\mbF_k$-measurable for $i=1,\ldots,n$.

\subsection{Lower Bound for $\varphi_{n+1,k}$-gap}\label{secPhin_p_1_gap}
From lines~\ref{lineNoise1}-\ref{xupdateLip} of the algorithm, we have
\begin{align*}
z^k - x_{n+1}^k = \rho_k( B (z^k) - w_{n+1}^k + \epsilon^k).
\end{align*}
Therefore,
\begin{align}
\varphi_{n+1,k}(p^k)
&=
\langle z^k - x_{n+1}^k,y_{n+1}^k - w_{n+1}^k\rangle
\\
&=
\langle z^k - x_{n+1}^k, B(z^k) - w_{n+1}^k\rangle
+
\langle z^k - x_{n+1}^k,y_{n+1}^k -  B(z^k)\rangle
\nonumber\\
&=
\rho_k\langle  B(z^k) - w_{n+1}^k+\epsilon^k, B(z^k) - w_{n+1}^k\rangle
+
\langle z^k - x_{n+1}^k,y_{n+1}^k -  B(z^k)\rangle
\nonumber\\
&=
\rho_k\| B(z^k) - w_{n+1}^k\|^2
+
\langle z^k - x_{n+1}^k,y_{n+1}^k -  B(z^k)\rangle
+
\rho_k\langle\epsilon^k, B(z^k) - w_{n+1}^k\rangle
\nonumber\\
&\overset{\text{(a)}}{=}
\rho_k\| B (z^k) - w_{n+1}^k\|^2
+
\langle z^k - x_{n+1}^k , B(x_{n+1}^k)-  B(z^k)\rangle
+
\langle z^k - x_{n+1}^k ,e^k\rangle
\nonumber\\
&\qquad\qquad
+
\rho_k\langle \epsilon^k, B(z^k)-w_{n+1}^k\rangle
\nonumber\\
&\geq
\rho_k\| B (z^k) - w_{n+1}^k\|^2
-L\|z^k  - x_{n+1}^k\|^2
+
\langle z^k - x_{n+1}^k ,e^k\rangle
\nonumber\\
&\qquad\qquad
+
\rho_k\langle \epsilon^k, B(z^k)-w_{n+1}^k\rangle
\nonumber\\
&=
\rho_k\| B (z^k) - w_{n+1}^k\|^2
-L\|\rho_k( B(z^k) - w_{n+1}^k + \epsilon^k)\|^2
+
\langle z^k - x_{n+1}^k ,e^k\rangle
\nonumber\\
&\qquad\qquad
+
\rho_k\langle \epsilon^k, B(z^k)-w_{n+1}^k\rangle
\nonumber\\
&=
\rho_k\| B (z^k) - w_{n+1}^k\|^2
-\rho_k^2L\| B(z^k) - w_{n+1}^k + \epsilon^k\|^2
+
\langle z^k - x_{n+1}^k ,e^k\rangle
\nonumber\\
&\qquad\qquad
+
\rho_k\langle \epsilon^k, B(z^k)-w_{n+1}^k\rangle
\nonumber\\
&=
\rho_k(1-\rho_k L)\| B (z^k) - w_{n+1}^k\|^2
-\rho_k^2L\| \epsilon^k\|^2
+
\langle z^k - x_{n+1}^k ,e^k\rangle
\nonumber\\
&\qquad\qquad
+
\rho_k(1-2\rho_k L)\langle \epsilon^k, B(z^k)-w_{n+1}^k\rangle,
\label{lowerPhi1}
\end{align}
where equality (a) uses line~\ref{lineXYend} of the algorithm and the
inequality employs the Cauchy-Schwartz inequality followed by Lipschitz
continuity of $B$.

On the other hand,
\begin{align}
	-\varphi_{n+1,k}(p^*)
	&=
	\langle z^* - x_{n+1}^k, w_{n+1}^*-y_{n+1}^k\rangle
	\nonumber\\
	&=
	\langle z^* - x_{n+1}^k,  B(z^*)- B (x_i^k)\rangle
	+ \langle x_{n+1}^k-z^* ,e^k\rangle
	\nonumber\\\label{lowerPhi2}
	&\geq
	 \langle x_{n+1}^k-z^* ,e^k\rangle,
\end{align}
where the second equality uses line~\ref{lineXYend} of the algorithm and the
inequality follows from the monotonicity of $B$.

Combining  \eqref{lowerPhi1} and \eqref{lowerPhi2} yields
\begin{align}
\varphi_{n+1,k}(p^k) - \varphi_{n+1,k}(p^*)
&\geq
\rho_k(1-\rho_k L)\| B(z^k) - w_{n+1}^k\|^2
+
\rho_k(1-2\rho_k L)\langle \epsilon^k,B(z^k)-w_{n+1}^k\rangle
\nonumber\\
&\quad\quad
+
\langle z^k - x_{n+1}^k ,e^k\rangle
+
 \langle x_{n+1}^k-z^* ,e^k\rangle
 -\rho_k^2L\|\epsilon^k\|^2
 \nonumber\\
 &=
\rho_k(1-\rho_k L)\| B (z^k) - w_{n+1}^k\|^2
 -\rho_k^2L\|\epsilon^k\|^2
\nonumber\\
&\quad\quad
+
\rho_k(1-2\rho_k L)\langle \epsilon^k, B(z^k)-w_{n+1}^k\rangle
+
\langle z^k - z^* ,e^k\rangle.
\label{eqProg}
\end{align}

Now, if we take expectations conditioned on $\mbF_k$ and use \eqref{unbiasedAss}, we
obtain
\begin{align}\label{expect1}
\E\!\left[\left.\langle z^k - z^* ,e^k\rangle\,\right|\,\mbF_k\right]
&=
\langle z^k - z^* ,\E[e^k\,|\,\mbF_k]\rangle = 0.
\end{align}
Similarly, \eqref{unbiasedAss} also yields
\begin{align}\label{expect2}
\E\!\left[\left.\langle \epsilon^k, B(z^k)-w_{n+1}^k\rangle\,\right|\,\mbF_k\right]
&=
\left\langle \E[\epsilon^k|\mbF_k], B(z^k)-w_{n+1}^k\right\rangle
= 0.
\end{align}
Thus, using \eqref{expect1} and \eqref{expect2} and taking expectations of
\eqref{eqProg} yields
\begin{align}
\E[\varphi_{n+1,k}(p^k) -\varphi_{n+1,k}(p^*)\,|\,\mbF_k]
&\geq
\rho_k(1-\rho_k L)\| B (z^k) - w_{n+1}^k\|^2
 -\rho_k^2L\E[\|\epsilon^k\|^2|\mbF_k]
 \nonumber\\
&\geq
\rho_k(1-\orho L) \| B(z^k) - w_{n+1}^k\|^2
 -\rho_k^2N L(1+\| B(z^k)\|^2),
 \label{eqNew}
\end{align}
where in the second inequality we used \eqref{stepRuleSumInf} and the noise variance bound \eqref{noiseBound1}. Recall from \eqref{stepRuleSumInf} that $1-\orho L > 0$.

Next, we remark that
\begin{align*}
\| B(z^k)\|^2
& =
\| B(z^k)- B(z^*)+ B(z^*)\|^2
\\
&\leq
2L^2\|z^k-z^*\|^2+2\| B(z^*)\|^2
\leq
2 L^2\|p^k-p^*\|^2+2\| B(z^*)\|^2.
\end{align*}
Substituting this inequality into \eqref{eqNew} yields
\begin{align}
\E[\varphi_{n+1,k}(p^k) -\varphi_{n+1,k}(p^*)|\mbF_k]
&\geq
\rho_k (1-\orho L)\| B (z^k) - w_{n+1}^k\|^2
\nonumber\\
&\quad\quad
 -2\rho_k^2NL^3\|p^k-  p^*\|^2 - \rho^2_k N L (1+2\| B(z^*)\|^2).
 \label{varphigapL}
\end{align}

\paragraph{Finalizing the lower bound on the $\varphi_k$-gap}
Summing~\eqref{varphigapM} over $i=1,\ldots,n$ and using~\eqref{varphigapL} yields
\begin{align}
\E[\varphi_k(p^k) - \varphi_k(p^*)|\mbF_k]
&=
\suminp\E[\varphi_{i,k}(p^k) - \varphi_{i,k}(p^*)|\mbF_k]
\nonumber\\
&\geq
\frac{\tau}{2}\sumin\|y_i^k - w_i^k\|^2 + \frac{1}{2\tau}\sumin\|z^k - x_i^k\|^2
\nonumber\\
&\qquad\quad
+
\rho_k (1-\orho L)\| B (z^k) - w_{n+1}^k\|^2
 -2\rho_k^2 N L^3\|p^k-  p^*\|^2
\nonumber\\
&\qquad\quad
 - \rho^2_k N L(1+2\| B(z^*)\|^2).
\label{expectedPhi}
\end{align}

\subsection{Establishing Stochastic Quasi-Fejer Monotonicity}
Returning to \eqref{eqQuasi},
\begin{align*}
	\E[\|p^{k+1} - p^*\|^2 \,|\,\mbF_k]
	&\leq
	(1+C_1\alpha_k^2 )\|p^k - p^*\|^2 - 2\alpha_k\E[\varphi_k(p^k)-\varphi_k(p^*)\,|\,\mbF_k]
	+ C_2\alpha_k^2,
\end{align*}
we may now substitute \eqref{expectedPhi} for the expectation on the right-hand side. First, define
\begin{align*}
T_k\triangleq
\frac{\tau}{\orho}\sumin\|y_i^k - w_i^k\|^2 + \frac{1}{\orho\tau}\sumin\|z^k - x_i^k\|^2
+
2(1-\orho L)\| B (z^k) - w_{n+1}^k\|^2,
\end{align*}
after which we may use \eqref{expectedPhi} in \eqref{eqQuasi} to yield
\begin{align}
	\E[\|p^{k+1} - p^*\|^2 \,|\,\mbF_k]
	&\leq
	(1+ C_1\alpha_k^2+ C_3\alpha_k\rho_k^2)\|p^k - p^*\|^2 - \alpha_k \rho_k T_k
	+ C_2 \alpha_k^2 + C_4\alpha_k\rho_k^2\label{finalStochquas}
\end{align}
where $C_1$ and $C_2$ are defined as before in \eqref{defC1} and \eqref{defC2} and
\begin{align}\label{defC3}
C_3 &= 4NL^3
\\\label{defC4}
C_4 &=  2NL(1+2\| B(z^*)\|^2).
\end{align}
This completes the proof of Lemma \ref{lemKeyRecurse}.

\subsection{A Convergence Lemma}
Before establishing almost-sure convergence, we need the following lemma to
derive convergence of the iterates from convergence of $T_k$ defined above.
Note that a more elaborate result would be needed in an infinite-dimensional
setting.
\begin{lemma}\label{lemSeqConv}
For deterministic sequences $z^k\in\rR^{(n+1)d},\{(w_i^k)_{i=1}^{n+1}\}\in\cP$, and
$\{(x_i^k,y_i^k)_{i=1}^{n+1}\}\in\rR^{2(n+1)d}$, suppose that $y_i^k\in A_i(x_i^k)$ for $i=1,\ldots,n$,
$\suminp w_i^k = 0$,
\begin{align}\label{eqLemStart}
\xi_1\sumin\|y_i^k - w_i^k\|^2 + \xi_2\sumin\|z^k - x_i^k\|^2
+
\xi_3\| B (z^k) - w_{n+1}^k\|^2
\to 0
\end{align}
for scalars $\xi_1,\xi_2,\xi_3 > 0$, and $p^k\triangleq
(z^k,w_1^k,\ldots,w_{n+1}^k)\to\hat{p}
\triangleq(\hat{z},\hat{w}_1,\ldots,\hat{w}_{n+1})$.
Then $\hat{p}\in \cS$.
\end{lemma}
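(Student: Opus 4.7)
The plan is to extract the three conclusions defining membership in $\cS$ from the three hypotheses in turn, exploiting continuity of $B$ and the closed-graph property of maximal monotone operators.

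First I would note that since $\xi_1,\xi_2,\xi_3 > 0$ and the weighted sum in \eqref{eqLemStart} tends to zero, each individual term must tend to zero separately. This gives the three convergences $\|z^k - x_i^k\| \to 0$ and $\|y_i^k - w_i^k\| \to 0$ for $i=1,\dots,n$, together with $\|B(z^k) - w_{n+1}^k\| \to 0$. Combining these with the assumed convergence $p^k \to \hat{p}$ yields, by the triangle inequality, that $x_i^k \to \hat{z}$ and $y_i^k \to \hat{w}_i$ for $i=1,\dots,n$, while $w_{n+1}^k \to \hat{w}_{n+1}$ and $B(z^k) \to \hat{w}_{n+1}$.

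Next I would handle the $n+1$ coordinate. Since $B$ is $L$-Lipschitz, it is continuous, so $z^k \to \hat{z}$ implies $B(z^k) \to B(\hat{z})$. Uniqueness of limits then gives $\hat{w}_{n+1} = B(\hat{z})$. For the balance condition, passing to the limit in $\suminp w_i^k = 0$ (which holds because $p^k \in \cP$ for each $k$) yields $\suminp \hat{w}_i = 0$.

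The main step, and the one to write carefully, is the inclusion $\hat{w}_i \in A_i(\hat{z})$ for $i=1,\dots,n$. The hypothesis gives $y_i^k \in A_i(x_i^k)$, and I have just established $x_i^k \to \hat{z}$ and $y_i^k \to \hat{w}_i$. Since each $A_i$ is maximal monotone, its graph is sequentially closed in the strong-strong topology of $\rR^d \times \rR^d$ (see, e.g., \cite[Prop.~20.38]{bauschke2011convex}), so the limit point $(\hat{z},\hat{w}_i)$ lies in the graph of $A_i$, i.e., $\hat{w}_i \in A_i(\hat{z})$. Combining all three observations with the definition~\eqref{Sdef} of $\cS$ gives $\hat{p} \in \cS$. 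The only non-routine ingredient is the closed-graph property of maximal monotone operators; everything else is continuity plus the triangle inequality.
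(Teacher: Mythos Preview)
Your proof is correct and follows essentially the same approach as the paper: extract the three termwise convergences from~\eqref{eqLemStart}, push them through to limits of $x_i^k$ and $y_i^k$, and invoke the closed-graph property of maximal monotone operators for the inclusions $\hat{w}_i\in A_i(\hat{z})$. The only cosmetic difference is that for the identity $\hat{w}_{n+1}=B(\hat{z})$ you use continuity of $B$ directly, whereas the paper applies the closed-graph result \cite[Prop.~20.37]{bauschke2011convex} a second time; your route is slightly more elementary there but otherwise the arguments coincide.
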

\begin{proof}
Fix any $i\in\{1,\ldots,n\}$.  Since $\|y_i^k - w_i^k\|\to 0$ by
\eqref{eqLemStart} and $w_i^k\to \hat{w}_i$, we also have $y_i^k\to\hat{w}_i$.
Similarly, \eqref{eqLemStart} also implies that $\|z^k -x_i^k\|\to 0$,
so from $z^k\to \hat{z}$ we also have $x_i^k\to
\hat{z}$. Since $y_i^k\in A_i(x_i^k)$ and $(x_i^k,y_i^k)\to
(\hat{z},\hat{w}_i)$, \cite[Prop.~20.37]{bauschke2011convex} implies
$\hat{w}_i\in A_i(\hat{z})$.  Since $i$ was arbitrary, the preceding
conclusions hold for $i=1,\ldots,n$.

Now, \eqref{eqLemStart} also implies that $\|B(z^k) - w_{n+1}^k\|\to 0$.
Therefore, since $w_{n+1}^k\to\hat{w}_{n+1}$, we also have $B(z^k)\to
\hat{w}_{n+1}$. Much as before, since
$(z^k,B(z^k))\to(\hat{z},\hat{w}_{n+1})$, we may apply
\cite[Prop.~20.37]{bauschke2011convex} to conclude that that $\hat{w}_{n+1} =
B(\hat{z})$.


Since the linear subspace $\cP$ defined in~\eqref{subspaceP} must be closed,
the limit $(\hat{z},\hat{w}_1,\ldots,\hat{w}_{n+1})$ of
$\{(z^k,w_1^k,\ldots,w_{n+1}^k)\} \subset \cP$ must be in $\cP$, hence
$\suminp
\hat{w}_i = 0$.

Thus, the point $\hat{p}=(\hat{z},\hat{w}_1,\ldots,\hat{w}_{n+1})$ satisfies
$\hat{w}_i\in A_i(\hat{z})$ for $i=1,\ldots,n$, $\hat{w}_{n+1}=B(\hat{z})$,
and $\suminp \hat{w}_i = 0$. These are the three conditions defining
membership in $\cS$ from \eqref{Sdef}, so $\hat{p}\in\cS$.
\end{proof}

\subsection{Finishing the Proof of Theorem \ref{thmMain} }
Given  $\sum_k\alpha_k^2<\infty$, and $\sum\alpha_k\rho_k^2<\infty$,
\eqref{finalStochquas} satisfies the conditions of \textit{Stochastic Quasi-Fejer Monotonicity} as given in Lemma \ref{lemQuasi}.  By applying Lemma \ref{lemQuasi}, we conclude
that there exist $\Omega_1,\Omega_2,\Omega_3$ such that $P[\Omega_i] = 1$ for $i=1,2,3$ and
\begin{enumerate}
\item for all $v\in\Omega_1$
\begin{align}\label{eqNotSums}
	\sum_{k=1}^\infty \alpha_k\rho_k T_k(v) <\infty,
\end{align}
\item
for all $v\in\Omega_2$, and $p^*\in\cS$, $\|p^k(v) - p^*\|$ converges to a finite nonnegative random-variable,
\item for all $v\in\Omega_3$, $p^k(v)$ remains bounded.
\end{enumerate}

Since $\sumk\alpha_k\rho_k = \infty$, \eqref{eqNotSums} implies that
for all $v\in\Omega_1$ there exists a subsequence $q_k(v)$ such that
\begin{align}\label{eqSubseqConv}
T_{q_k(v)} \to 0.
\end{align}


Let $\Omega'= \Omega_1\cap\Omega_2\cap \Omega_3$ and note that $P[\Omega']=1$.  Choose $v\in\Omega'$.
Since $p^k(v)$ remains bounded, so does $p^{q_k(v)}(v)$
for $q_k(v)$ defined above in \eqref{eqSubseqConv}. Thus
there exists a subsequence $r_k(v)\subseteq q_k(v)$ and $\hat{p}(v)\in \rR^{(n+2)d}$ such that $p^{r_k(v)}(v)\to \hat{p}(v)$.  But since $T_{q_k(v)}\to 0$, it also follows that $T_{r_k(v)}\to 0$, that is,
\begin{multline*}
\frac{\tau}{\orho}\sumin\|y_i^{r_k(v)}(v) - w_i^{r_k(v)}(v)\|^2 + \frac{1}{\orho\tau}\sumin\|z^{r_k(v)}(v) - x_i^{r_k(v)}(v)\|^2
\\
+
2(1-\orho L)\| B (z^{r_k(v)}(v)) - w_{n+1}^{r_k(v)}(v)\|^2
\to 0.
\end{multline*}
We then have from Lemma \ref{lemSeqConv} that $\hat{p}(v)\in\cS$.

Since $p^{r_k(v)}(v) \to  \hat{p}(v)$, it follows that $\|p^{r_k(v)}(v) -
\hat{p}(v)\|\to 0$. But since $\hat{p}(v)\in\cS$, $\|p^{k}(v) - \hat{p}(v)\|$
converges by point 2 above. Thus
\begin{align*}
\lim_{k\to\infty}\|p^k (v)- \hat{p}(v)\| = \lim_{k\to\infty}\|p^{r_k(v)}(v) - \hat{p}(v)\| = 0.
\end{align*}
Therefore $p^k(v)\to \hat{p}(v)\in \cS$. Thus
there exists $\hat{p}\in \cS$ such that $p^k\to\hat{p}$ a.s., which completes the proof of Theorem \ref{thmMain}.

\section{Proof of Lemma \ref{lemOk}}

If $O_k = 0$, then
\begin{align}
\label{O2}
\forall i =1,\ldots,n: \quad y_i^k = w_i^k
\text{ and } z^k = x_i^k.
\end{align}
Since $y_i^k\in A_i(x_i^k)$ for $i = 1,\ldots,n$,
\eqref{O2} implies that
that
\begin{align}\label{O4}
\forall i=1,\ldots,n: \quad w_i^k\in A_i(z^k).
\end{align}
Furthermore $O_k=0$ also implies that $w_{n+1}^k=B(z^k)$.
Finally, since $\suminp w_i^k=0$, we have that
$$
(z^k,w_1^k,\ldots,w_{n+1}^k)\in\cS.
$$

Conversely, suppose $(z^k,w_1^k,\ldots,w_{n+1}^k)\in \cS$.  The definition of
$\cS$ implies that $B(z^k) = w_{n+1}^k$ and furthermore that $w_i^k\in
A_i(z^k)$ for $i=1,\ldots,n$. For any $i=1,\ldots,n$, considering line
\ref{lineXYone} of Algorithm \ref{algSPS}, we may write $t_i^k = z^k+\tau
w_i^k\in (I+\tau A_i)(z^k)$, implying $z^k\in (I+\tau A_i)^{-1}(t_i^k)$.  But
since the resolvent $J_{\tau A_i}=(I+\tau A_i)^{-1}$ is single-valued
\cite[Prop.~23.8]{bauschke2011convex}, we must have $z^k = (I+\tau
A_i)^{-1}(t_i^k)$. Thus, by line \ref{xupdate}, we have $x_i^k = z^k$. We may
also derive from line \ref{yupdate} that
\begin{align*}
y_i^k = \tau^{-1}(t_i^k - x_i^k)
=
\tau^{-1}(z^k+\tau w_i^k - z^k)=w_i^k.
\end{align*}
Thus, since $x_i^k = z^k$ and $y_i^k = w_i^k$ for $i=1,\ldots, n$ and $w_{n+1}^k = B(z^k)$, we have that $O_k=0$.

\section{Proof of Theorem \ref{thmConvR}}

In addition to the proof, we provide a more detailed statement of the theorem:

\begin{theorem}
Fix the total iterations $K\geq 1$ of Algorithm \ref{algSPS} and
set
\begin{align}\label{step1s}
&\forall k=1,\dots, K:& \rho_k&=\rho\triangleq
\min
\left\{
K^{-1/4},\frac{1}{2L}
\right\}
\\
&\forall k=1,\dots, K:& \alpha_k &=
\alpha\triangleq
C_f \rho^2
\label{step2}
\end{align}
for some $C_f>0$.
Suppose
\eqref{unbiasedAss}-\eqref{noiseBound2} hold.
Then for any $p^*\in\cS$,
\begin{align}
\frac{1}{K}\sum_{j=1}^K
\E[O_j]
&\leq
\frac{8L^3\exp\left(C_f(C_1+C_3)\right)}{C_f\min\{\tau,\tau^{-1}\}K}
\left(
\|p^1 - p^*\|^2 + \frac{C_fC_2+C_4}{C_fC_1+C_3}\right)
& \text{for }
K &< (2L)^{4}
\label{eqRate1}
\\
\label{eqRate2}
\frac{1}{K}\sum_{j=1}^K
\E[O_j]
&\leq
\frac{\exp\left(C_f(C_1+C_3)\right)}{C_f\min\{\tau,\tau^{-1}\}K^{1/4}}
\left(
\|p^1 - p^*\|^2 + \frac{C_fC_2+C_4}{C_fC_1+C_3}\right)
& \text{for }
K &\geq (2L)^{4}.
\end{align}
where 
$O_k$ is the approximation residual defined in \eqref{Okdef}, and
$C_1,C_2,C_3,C_4$ are the nonegative constants defined in \eqref{defC1},
\eqref{defC2}, \eqref{defC3},  and \eqref{defC4}, respectively. Therefore,
\begin{align*}
\frac{1}{K}\sum_{j=1}^K
\E[O_j]
&=
\bigO(K^{-1/4}).
\end{align*}
\end{theorem}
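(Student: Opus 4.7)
The plan is to iterate Lemma \ref{lemKeyRecurse} under the fixed stepsize choice, telescope over $k=1,\ldots,K$, convert the resulting bound on $\sum_k \E[T_k]$ into one on $\sum_k \E[O_k]$, and finally substitute the two regimes of $\rho$ to obtain \eqref{eqRate1} and \eqref{eqRate2}.

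First I would apply Lemma \ref{lemKeyRecurse} with $\alpha_k = C_f \rho^2$ and $\rho_k = \rho$ and take total expectations. Setting $V_k := \E\|p^k - p^*\|^2$, $G := C_f \rho^4 (C_f C_1 + C_3)$, and $H := C_f \rho^4 (C_f C_2 + C_4)$, the recursion becomes
\begin{align*}
V_{k+1} + C_f \rho^3\, \E[T_k] \leq (1+G)\, V_k + H.
\end{align*}
Dropping the nonnegative $\E[T_k]$ term and iterating yields the uniform bound $V_k \leq (1+G)^{K} (V_1 + H/G)$ for every $k \leq K+1$. Summing the rearranged recursion from $k=1$ to $K$ and applying this uniform bound gives an inequality of the form $C_f \rho^3 \sum_{k=1}^K \E[T_k] \leq (1+G)^K \bigl((1 + KG) V_1 + KH\bigr)$.

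Next I would convert $T_k$ to $O_k$. Recall that $T_k$ weights the three components of $O_k$ by $\tau/\rho$, $1/(\rho\tau)$, and $2(1-\rho L)$. The cap $\rho \leq 1/(2L)$ forces $2(1-\rho L) \geq 1 \geq \min(\tau, 1/\tau)$, while $\rho \leq 1$ (which holds in both regimes of the stepsize choice, since $K \geq 1$ and, if needed, $L \geq 1/2$) forces $\tau/\rho \geq \min(\tau, 1/\tau)$ and $1/(\rho\tau) \geq \min(\tau, 1/\tau)$. Hence $T_k \geq \min(\tau, 1/\tau)\, O_k$. Substituting into the telescoped bound produces
\begin{align*}
\frac{1}{K}\sum_{k=1}^K \E[O_k] \leq \frac{(1+G)^K}{C_f \rho^3 \min(\tau, 1/\tau)} \cdot \frac{(1+KG)\, V_1 + K H}{K}.
\end{align*}

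Finally I would substitute $\rho = \min\{K^{-1/4}, 1/(2L)\}$ and split into two cases. For $K \geq (2L)^4$, $\rho = K^{-1/4}$ so $\rho^4 K = 1$, making $KG$ and $KH$ constants independent of $K$, $(1+G)^K \leq \exp(KG)$ bounded, and $1/(K\rho^3) = K^{-1/4}$; this yields \eqref{eqRate2}. For $K < (2L)^4$, $\rho = 1/(2L)$ gives $1/\rho^3 = 8L^3$ and the $8L^3/K$ prefactor of \eqref{eqRate1}. The $\bigO(K^{-1/4})$ asymptotic then follows from \eqref{eqRate2}. The main obstacle will be the bookkeeping needed to match the exact constants claimed (for instance, reducing $C_f(C_f C_1 + C_3)$ to $C_f(C_1 + C_3)$ likely requires absorbing a $C_f$ factor into $C_1$ or imposing $C_f \leq 1$). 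Conceptually the only non-routine step is the lower bound $T_k \geq \min(\tau, 1/\tau)\, O_k$, which relies essentially on the cap $\rho \leq 1/(2L)$ keeping the third coefficient of $T_k$ at least one.
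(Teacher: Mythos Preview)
Your proposal is correct and follows essentially the same route as the paper: apply Lemma~\ref{lemKeyRecurse} with constant stepsizes, take total expectations, telescope, and convert $T_k$ to $O_k$. The paper organizes the telescoping slightly differently --- it splits $\rho\,\E[T_k]$ into pieces $\rho U_k + W_k$, defines an accumulator $R_k = V_k + \alpha\rho\sum_{j<k}U_j + \alpha\sum_{j<k}W_j$, and iterates $R_{k+1}\leq(1+G)R_k+H$ directly, which yields the clean bound $(1+G)^K(V_1+H/G)$ without your intermediate uniform bound on $V_k$. But if you carry your sum $V_1 + G\sum_k V_k + KH$ through exactly (using $V_k \leq (1+G)^{k-1}V_1 + H((1+G)^{k-1}-1)/G$ rather than the crude uniform bound), the geometric sums collapse to exactly the same $(1+G)^K(V_1+H/G)$, so the two bookkeeping schemes are equivalent. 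Your direct comparison $T_k\geq\min(\tau,\tau^{-1})\,O_k$ is likewise the same as the paper's $\E[O_k]\leq\max(\tau,\tau^{-1})(U_k+W_k)$, just phrased from the other side.

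Two small remarks. First, the side condition $L\geq 1/2$ you mention is unnecessary: since $K\geq 1$, one always has $\rho\leq K^{-1/4}\leq 1$ regardless of $L$, which the paper also notes. Second, the constant mismatch you flag --- $C_f(C_fC_1+C_3)$ versus the stated $C_f(C_1+C_3)$ --- is real: the paper's own proof also arrives at $\exp\!\bigl(C_f(C_fC_1+C_3)\bigr)$, so the exponent in the theorem statement appears to be a typo rather than something you need to engineer away.
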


\begin{proof}
Fix $\alpha_k=\alpha$ and $\rho_k = \rho$, where $\alpha$ and $\rho$ are the
respective right-hand sides of \eqref{step1s}-\eqref{step2}. Lemma
\ref{lemKeyRecurse} implies that \eqref{finalStochquasLem} so long as
\eqref{unbiasedAss}-\eqref{noiseBound2} hold and the stepsize $\rho$ satisfies
$\rho < L^{-1}$.
Since
\begin{align*}
\rho &=\min
\left\{
K^{-1/4},\frac{1}{2L}
\right\}
\leq
\frac{1}{2L},
\end{align*}
we conclude that~\eqref{finalStochquasLem} applies.

Rewriting \eqref{finalStochquasLem} with $\alpha_k=\alpha$ and $\rho_k=\rho$, we have
\begin{align*}
	\E[\|p^{k+1} - p^*\|^2 \,|\,\mbF_k]
	&\leq
				(1+C_1\alpha^2 +C_3\alpha\rho^2)\|p^k - p^*\|^2 - \alpha \rho T_k
+C_2\alpha^2 + C_4\alpha\rho^2.
\end{align*}
Therefore, taking expectations over $\cF_k$, we have
\begin{align}
	\E\|p^{k+1} - p^*\|^2
	&\leq
				(1+C_1\alpha^2 +C_3\alpha\rho^2)\E\|p^k - p^*\|^2 - \alpha \rho \E T_k
+C_2\alpha^2 + C_4\alpha\rho^2.
\label{finiteProg}
\end{align}
Recall that
\begin{align*}
T_k\triangleq
\frac{\tau}{\rho}\sumin\|y_i^k - w_i^k\|^2 + \frac{1}{\rho\tau}\sumin\|z^k - x_i^k\|^2
+
2(1-\orho L)\| B (z^k) - w_{n+1}^k\|^2,
\end{align*}
where for the first two terms we have simply set $\rho=\orho$ because the stepsize is constant. 
However, for the final term, we will still use an upper bound, $\orho$, on $\rho$. 
In the current setting, we know that $\rho \leq (1/2) L^{-1}$ and therefore
we may set $\orho = (1/2)L^{-1}$.  Thus
$1-\orho
L = 1/2$, leading to
\begin{align*}
\rho\E T_k
=
\tau\sumin\E\|y_i^k - w_i^k\|^2 + \tau^{-1}\sumin\E\|z^k - x_i^k\|^2
+
\rho \E\| B (z^k) - w_{n+1}^k\|^2.
\end{align*}
Let
\begin{align*}
U_k &\triangleq  \E\| B(z^k) - w_{n+1}^k\|^2
&
W_k&\triangleq
\tau\sumin\E\|y_i^k - w_i^k\|^2 + \tau^{-1}\sumin\E\|z^k - x_i^k\|^2,
\end{align*}
so that
\begin{align*}
\rho\E T_k = \rho U_k + W_k,
\end{align*}
and also let
\begin{align*}
	V_k &\triangleq \E\|p^{k} - p^*\|^2.
\end{align*}

Using these definitions in \eqref{finiteProg} we write
\begin{align*}
	V_{k+1} &\leq (1+C_1\alpha^2 + C_3\alpha\rho^2)V_k - \alpha\rho U_k - \alpha W_k + C_2 \alpha^2  + C_4\alpha\rho^2.
\end{align*}
Therefore,
\begin{align*}
	V_{k+1} + \alpha\rho U_k + \alpha W_k &\leq (1+C_1\alpha^2 + C_3\alpha\rho^2)V_k  + C_2\alpha^2 + C_4\alpha\rho^2
	\\
\iff	V_{k+1} + \alpha \rho\sum_{j=1}^{k }U_j +\alpha\sumj W_j
&\leq
(1+C_1\alpha^2 + C_3\alpha\rho^2)V_k
+\alpha \rho\sum_{j=1}^{k-1}U_j + \alpha\sumjm W_j
\\
&\qquad\quad
 + C_2 \alpha^2 +C_4\alpha\rho^2
\\
		&\leq
(1+C_1\alpha^2 + C_3\alpha\rho^2)
\left[V_k
+\alpha \rho\sum_{j=1}^{k-1}U_j + \alpha\sumjm W_j
\right]
\\
&\qquad\quad
 + C_2 \alpha^2 +C_4\alpha\rho^2,
\end{align*}
where we have used that $U_k,W_k\geq 0$.
Letting
\begin{align*}
R_k &= V_k + \alpha\rho\sum_{j=1}^{k-1}U_j + \alpha\sumjm W_j,
\end{align*}
we then have
\begin{align*}
R_{k+1}\leq (1+C_1\alpha^2 + C_3\alpha\rho^2)R_k + C_2\alpha^2 +C_4\alpha\rho^2,
\end{align*}
which implies
\begin{align*}
R_{k+1}\leq (1+C_1\alpha^2 + C_3\alpha\rho^2)^k R_1 + (C_2\alpha^2 +C_4\alpha\rho^2)\sum_{j=1}^k (1+C_1\alpha^2 + C_3\alpha\rho^2)^{k-j}.
\end{align*}
Now,
\begin{align*}
\sum_{j=1}^k (1+C_1\alpha^2+C_3\alpha\rho^2)^{k-j}
&=
\sum_{j=0}^{k-1} (1+C_1\alpha^2+C_3\alpha\rho^2)^{j}
\\
&=
\frac{(1+C_1\alpha^2+C_3\alpha\rho^2)^k-1}{(1+C_1\alpha^2+C_3\alpha\rho^2) - 1}
\\
&=
\frac{(1+C_1\alpha^2+C_3\alpha\rho^2)^k-1}{C_1\alpha^2+C_3\alpha\rho^2}
\\
&\leq
\frac{(1+C_1\alpha^2+C_3\alpha\rho^2)^k}{C_1\alpha^2+C_3\alpha\rho^2}.
\end{align*}
Therefore,
\begin{align*}
R_{k+1}\leq
(1+C_1\alpha^2+C_3\alpha\rho^2)^k
\left(R_1
+
\frac{C_2\alpha^2+C_4\alpha\rho^2}{C_1\alpha^2+C_3\alpha\rho^2}
 \right).
\end{align*}

Fix the number of iterations $K\geq 1$.
Now
\begin{align*}
\rho &= \min
\left\{
K^{-1/4},\frac{1}{2L}
\right\}
\leq
\frac{1}{K^{1/4}}
\leq 1.
\end{align*}
Therefore,
\begin{align*}
\alpha\rho\sum_{j=1}^{K}(U_j + W_j)
&\leq
\alpha\rho\sum_{j=1}^{K}U_j + \alpha\sum_{j=1}^{K} W_j
\\
&\leq R_{K+1}
\\
&\leq
 (1+C_1\alpha^2+C_3\alpha\rho^2)^K
\left(R_1
+
\frac{C_2\alpha^2+C_4\alpha\rho^2}{C_1\alpha^2+C_3\alpha\rho^2}
 \right).
\end{align*}

Dividing through by $\alpha\rho K$, we obtain
\begin{align}
\frac{1}{K}\sum_{j=1}^{K}(U_j +  W_j)
&\leq
  \frac{(1+C_1\alpha^2+C_3\alpha\rho^2)^K}{\alpha\rho K}
\left(R_1
+
\frac{C_2\alpha^2+C_4\alpha\rho^2}{C_1\alpha^2+C_3\alpha\rho^2}
 \right),\label{eqNearly}
\end{align}

and since $\alpha = C_f\rho^2$, we also have
\begin{align*}
\frac{C_2\alpha^2+C_4\alpha\rho^2}{C_1\alpha^2+C_3\alpha\rho^2}
=
\frac{C_fC_2+C_4}{C_fC_1+C_3}.
\end{align*}
Furthermore,
\begin{align*}
\rho \leq K^{-\frac{1}{4}}\implies\alpha \leq C_f K^{-\frac{1}{2}}.
\end{align*}
Substituting these into \eqref{eqNearly} yields
\begin{align}
\frac{1}{K}\sum_{j=1}^{K}(U_j +  W_j)
&\leq
\frac{\left(1+\frac{C_f (C_f C_1+C_3)}{K}\right)^K}{\alpha\rho K}
\left(
R_1 + \frac{C_f C_2+C_4}{C_f C_1+C_3}\right)
\nonumber\\
&\leq
\frac{\exp(C_f (C_f C_1+C_3))}{\alpha\rho K}
\left(
R_1 + \frac{C_f C_2+C_4}{C_f C_1+C_3}\right),
\label{asyProg}
\end{align}
where we have used that for any $t\geq 0$, $1+t/K\leq e^{t/K}$, so therefore
$(1+t/K)^K\leq e^t$.

The worst-case rates in terms of $K$ occur when $\rho = K^{-1/4}$ and $\alpha = C_f K^{-1/2}$.
This is the case when $K\geq (2L)^4$.
Substituting these into the denominator  yields, for $K\geq (2L)^4$, that
\begin{align*}
\frac{1}{K}\sum_{j=1}^{K}(U_j +  W_j)
&\leq
\frac{\exp(C_f (C_1+C_3))}{C_f K^{1/4}}
\left(
R_1 + \frac{C_f C_2+C_4}{C_f C_1+C_3}\right).
\end{align*}
Thus,
since 
$
O_k\leq \max\{\tau,\tau^{-1}\}\left(U_k + W_k\right),
$
we obtain
\begin{align*}
\frac{1}{K}\sum_{j=1}^K
\E[O_j]
&\leq
\frac{\max\{\tau,\tau^{-1}\}\exp\left(C_f(C_1+C_3)\right)}{C_fK^{1/4}}
\left(
\|p^1 - p^*\|^2 + \frac{C_f C_2+C_4}{C_f C_1+C_3}\right),
\end{align*}
which is \eqref{eqRate2}.

When $K< (2L)^4$, \eqref{eqRate1} can similarly be obtained
by substituting  $\rho = (2L)^{-1}$ and $\alpha = C_f (2L)^{-2}$ into
\eqref{asyProg}.
\end{proof}

\section{Approximation Residuals}
In this section we derive the approximation residual used to assess the performance of the algorithms in the numerical experiments. This residual relies on the following product-space reformulation of~\eqref{mono1}.
\subsection{Product-Space Reformulation}
Recall \eqref{mono1}, the monotone inclusion we are solving:
\begin{align*}
\text{Find }z\in\rR^d:
0 \in \sumin A_i(z) + B(z).
\end{align*}
In this section we demonstrate a ``product-space" reformulation of
\eqref{mono1} which allows us to rewrite it in a standard form involving just
two operators, one maximal monotone and the other monotone and Lipschitz. This
approach was pioneered in \cite{briceno2011monotone+,combettes2012primal}.
Along with allowing for a simple definition of an approximation residual as a
measure of approximation error in solving \eqref{mono1}, it allows for one to
apply operator splitting methods originally formulated for two operators to
problems such as~\eqref{mono1} for any finite $n$.

Observe that solving \eqref{mono1} is equivalent to
\begin{align*}
\text{Find }
(w_1,\ldots,w_n,z)\in\rR^{(n+1)d}:
w_i&\in A_i (z),\quad i=1,\ldots,n\\
0 &\in \sumin w_i + B(z).
\end{align*}
This formulation resembles that of the extended solution set $\cS$ used in
projective spitting, as given in~\eqref{Sdef}, except that it combines the
final two conditions in the definition of $\cS$, and thus does not need the
final dual variable $w_{n+1}$. Owing to the definition of the inverse of an
operator, the above formulation is equivalent to
\begin{align*}
\text{Find }
(w_1,\ldots,w_n,z)\in\rR^{(n+1)d}:
0&\in A_i^{-1}(w_i) - z,\quad i=1,\ldots,n\\
0 &\in \sumin w_i + B(z).
\end{align*}
These conditions are in turn equivalent to finding
$(w_1,\ldots,w_n,z)\in\rR^{(n+1)d}$ such that
\begin{align}
0
\in
\underbrace{
\big(A_1^{-1}(w_1)\times
A_2^{-1}(w_2)\times
\ldots\times
A_n^{-1}(w_n)\times
\{B(z)\}\big)
+
\left[
\begin{array}{cccc}
0 & \cdots & 0 & -I\\
\vdots & \ddots & \vdots & \vdots \\
0 & \cdots & 0 & -I\\
I & \cdots & I & 0
\end{array}
\right]
\left[
\begin{array}{c}
w_1\\
\vdots \\
w_n\\
z
\end{array}
\right]
}_{\triangleq \mathscr{T}(q)},
\label{monoProdSpace}
\end{align}
where $q = (w_1,\ldots,w_n,z)\in\rR^{(n+1)d}$. It may be shown, using
\cite[Proposition 20.23]{bauschke2011convex} and the fact that skew-symmetric
linear operators are monotone, that $\fT:\rR^{(n+1)d}\to 2^{\rR^{(n+1)d}}$ is
maximal monotone. Thus we have reformulated \eqref{mono1} as the monotone
inclusion: $0\in \fT(q)$ in the extended space $\rR^{(n+1)d}$. A vector
$z\in\rR^d$ solves \eqref{mono1} if and only if there exists
$(w_1,\ldots,w_n)\in \rR^{nd}$ such that $0\in \fT(q)$ where
$q=(w_1,\ldots,w_n,z)$.

For any pair $(q,v)$ such that $v\in \fT(q)$, $\|v\|$ represents an
\textit{approximation residual} for $q$ in the sense that $v=0$ implies $q$ is
a solution to \eqref{monoProdSpace}.  The norm $\|v\|$ is a measure of the
approximation error of $q$ as an approximate solution of \eqref{monoProdSpace}
and is only equal to $0$ at a solution. Given two approximate solutions $q_1$
and $q_2$ with certificates $v_1\in T(q_1)$ and $v_2\in \fT(q_2)$, we will
assume that $q_1$ is a \textit{better} approximate solution if
$\|v_1\|<\|v_2\|$.  This is somewhat analogous to the practice in optimization
of using the gradient $\|\nabla f(x)\|$ as a measure of quality of an
approximate minimizer of $f$.  However, note that since $\fT(q_1)$ is a set,
there may be elements of $\fT(q_1)$ with smaller norm than $v_1$. Thus any
given certificate only corresponds to an upper bound on
$\text{dist}(0,\fT(q_1))$.

\subsection{Approximation Residual for Projective Splitting}

In SPS (Algorithm \ref{algSPS}),
for $i=1,\ldots,n$, the pairs $(x_i^k,y_i^k)$ are chosen so that $y_i^k\in A_i(x_i^k)$.  This can be seen from the definition of the resolvent.  Thus $x_i^k\in A_i^{-1}(y_i^k)$.  Observe that for $\fT$ defined in \eqref{monoProdSpace}
\begin{align*}
v^k\triangleq
\left[
\begin{array}{c}
x_1^k - z^k\\
\vdots \\
x_n^k - z^k\\
B(z^k) + \sumin y_i^k
\end{array}
\right]
\in
\fT(y_1^k,\ldots,y_n^k,z^k).
\end{align*}
Thus $R_k\triangleq\|v^k\|^2$, defined in \eqref{defRk}, represents a measure
of the approximation error for SPS, in the sense that $v^k=0$ implies  $z^k$
solves \eqref{mono1}. We may relate $R_k$ to the approximation residual $O_k$
for SPS from Section \ref{secConvRate} as follows:
\begin{align*}
R_k
&=
\sumin \|z^k - x_i^k\|^2
+\left\|B(z^k) +\sumin y_i^k\right\|^2
\\
&=
\sumin \|z^k - x_i^k\|^2
+\left\|B(z^k) +\sumin y_i^k - \suminp w_i^k\right\|^2
\\
&\leq
\sumin \|z^k - x_i^k\|^2
+
2\|B(z^k) - w_{n+1}^k\|^2
+
2\left\|\sumin (y_i^k - w_i^k)\right\|^2
\\
&\leq
\sumin \|z^k - x_i^k\|^2
+
2\|B(z^k) - w_{n+1}^k\|^2
+
2n\sumin\left\|y_i^k - w_i^k\right\|^2
\\
&\leq
2n O_k
\end{align*}
where in the second equality we have used the fact that $\suminp w_i^k = 0$.
Thus $R_k$ has the same convergence rate as $O_k$ given in Theorem \ref{thmConvR}.

\subsection{Approximation Residual for Tseng's method}
\newcommand{\fA}{\mathscr{A}}
\newcommand{\fB}{\mathscr{B}}

 Tseng's method \cite{tseng2000modified} can be applied to \eqref{monoProdSpace}  resulting in the following iteration, applied in the product space
with $q^k \in \rR^{(n+1)d}$,
\begin{align}
\bar{q}^k &=
J_{\alpha \fA}(q^k - \fB(q^k))
\label{Tseng1}
\\
q^{k+1}
&=
\bar{q}^k + \alpha(\fB(q^k) - \fB(\bar{q}^k))
\label{Tseng2}
\end{align}
where
\begin{align}
\fA(w_1,\ldots,w_n,z)\mapsto
(A_1^{-1}(w_1)\times
A_2^{-1}(w_2)\times
\ldots\times
A_n^{-1}(w_n)\times
\{0\})
\label{defFancyA}
\end{align}
and
\begin{align}
\fB(w_1,\ldots,w_n,z)\mapsto
\left[
\begin{array}{cccc}
0 & \cdots & 0 & -I\\
\vdots & \ddots & \vdots & \vdots \\
0 & \cdots & 0 & -I\\
I & \cdots & I & 0
\end{array}
\right]
\left[
\begin{array}{c}
w_1\\
\vdots \\
w_n\\
z
\end{array}
\right]
+
\big(\{0\} \times \cdots \times \{0\} \times B(z)\big).
\label{defFancyB}
\end{align}
Note that $\fT = \fA + \fB$. The operator $\fB$ may be shown to be monotone
and Lipschitz, while $\fA$ is maximal monotone. The resolvent of $\fA$ may be
readily computed from the resolvents of $A_i$ using Moreau's identity
\cite[Proposition 23.20]{bauschke2011convex}.

Analogous to SPS, Tseng's method has an approximation residual, which in this
case is an element of $\fT(\bar{q}^k)$. In particular, using the general
properties resolvent operators as applied to $J_{\alpha\fA}$, we have
\begin{align*}
\frac{1}{\alpha}(q^k - \bar{q}^k)
- \fB(q^k) \in \fA(\bar{q}^k).
\end{align*}
Also, rearranging \eqref{Tseng2} produces
\begin{align*}
\frac{1}{\alpha}(\bar{q}^k - q^{k+1}) + \fB (q^k)
=
\fB(\bar{q}^k).
\end{align*}
Adding these two relations produces
\begin{align*}
\fT(\bar{q}^k)
=
\fA(\bar{q}^k)
+
\fB(\bar{q}^k)
\ni
\frac{1}{\alpha}(q^k - q^{k+1}).
\end{align*}
Therefore,
\begin{align*}
R^{\text{Tseng}}_k \triangleq \frac{1}{\alpha^2}\|q^k - \bar{q}^{k+1}\|^2
\end{align*}
represents a measure of the approximation error for Tseng's method
equivalent to $R_k$ defined in \eqref{defRk} for SPS.

\subsection{Approximation Residual for FRB}
The forward-reflected-backward method (FRB) \cite{malitsky2020forward} is
another method that may be applied to the splitting $\fT = \fA + \fB$ for
$\fA$ and $\fB$ as defined in \eqref{defFancyA} and \eqref{defFancyB}. Doing so
yields the following method
\begin{align*}
q^{k+1}
=
J_{\alpha\fA}
[q^k
-
\alpha(2\fB(q^k) - \fB(q^{k-1}))
].
\end{align*}
Following similar arguments to those for Tseng's method, it can be shown that
\begin{align*}
\frac{1}{\alpha}
\left(
q^{k-1}
-q^k
\right)
+
\fB(q^k)
+
\fB(q^{k-2})
-
2\fB(q^{k-1})
\triangleq
v_{\text{FRB}}^k
\in
\fT(q^k).
\end{align*}
Thus, FRB admits the following approximation residual equivalent to $R_k$ for SPS:
\begin{align*}
R^{\text{FRB}}_k\triangleq \|v_{\text{FRB}}^k\|^2.
\end{align*}

To summarize, Figure \ref{fig} plots $R_k$ for SPS, $R^{\text{Tseng}}_k$ for
Tseng's method, and $R^{\text{FRB}}_k$ for FRB.

Finally, we point out that the stepsizes used in both Tseng and FRB can be chosen via a linesearch procedure which we do not detail here.

\section{Variational Inequalities}

For a mapping $B:\rR^d\to \rR^d$ and a closed and convex set $\cC$, the variational inequality problem \cite{harker1990finite} is to find $z^*\in\cC$ such that
\begin{align}\label{defVI}
B(z^*)^\top (z - z^*) \geq 0, \forall z\in \cC.
\end{align}
Consider the normal cone mapping discussed in Section \ref{secBackG} and defined as
\begin{align*}
N_{\cC}(x) \triangleq \{g: g^\top(y-x)\leq 0\,\, \forall y\in \cC\}
\end{align*}
It is easily seen that \eqref{defVI} is equivalent to finding $z^*$ such that
$-B(z^*)\in N_\cC(z^*)$. Hence, if $B$ is monotone, \eqref{defVI}
is equivalent to the monotone inclusion
\begin{align}\label{vi}
0\in B(z^*) + N_\cC(z^*).
\end{align}
Thus, monotone variational inequalities are a special case of monotone
inclusions with two operators, one of which is single-valued and the other is
the normal cone map of the constraint set $\cC$. As a consequence, methods for
monotone inclusions can be used to solve monotone variational inequality
problems. The reverse, however, may not be true. For example, the analysis of
the extragradient method~\cite{korpelevich1977extragradient} relies on the
second operator $N_{\cC}$ in~\eqref{vi} being a normal cone, as opposed to a
more general monotone operator.  We are not aware of any direct extension of
the extragradient method's analysis allowing a more general resolvent to be
used in place of the projection map corresponding to $N_{\cC}$.

\section{Memory-Saving Technique for SPS}
The variables $t_i^k$, $x_i^k$, and $y_i^k$ on lines
\ref{lineXYone}-\ref{yupdate} of SPS are stored in variables $t,x$ and $y$.
Another two variables $\bar{x}$ and $\bar{y}$ keep track of $\sumin x_i^k$ and
$\sumin y_i^k$. The dual variables are stored as $w_i$ for $i=1,\ldots,n$ and
the primal variable as $z$. Once $x=x_i^k$ is computed, the $i^\text{th}$ dual
variable $w_i$ can be partially updated as $w_i \leftarrow w_i-\alpha_k x$.
Once all the operators have been processed,  the update for each dual variable
may be completed via $w_i \leftarrow w_i+\alpha_k(n+1)^{-1}\bar{x}$. Also, the
primal update is computed as $z \leftarrow z-\alpha_k\bar{y}$. During the
calculation loop for the $x_i^k, y_i^k$, the terms in approximation residual
$R_k$ may also be accumulated one by one.  The total total number of vector
elements that must be stored is $(n
+ 7)d$.

\section{Additional Information About the Numerical Experiments}
Recall the problem~\eqref{drslr} considered in the numerical experiments:
\begin{align}
\begin{array}{rl}
\displaystyle{\min_{\substack{\beta\in\rR^d \\ \lambda\in\rR\,\,\,}}} \;\;
\displaystyle{\max_{\gamma\in\rR^m}}
&
\displaystyle{
\left\{
\lambda(\delta - \kappa) +
\frac{1}{m}\sum_{i=1}^m\Psi(\langle \hat{x}_i,\beta\rangle)
+
\frac{1}{m}
\sum_{i=1}^m
\gamma_i(
\hat{y}_i\langle\hat{x}_i,\beta\rangle - \lambda\kappa
)
+
c\|\beta\|_1
\right\}
}
\\
\,\text{s.t.} &
\|\beta\|_2\leq \lambda/(L_\Psi+1) \qquad \|\gamma\|_\infty\leq 1.
\end{array}
\label{drslr2}
\end{align}
We now show how we converted this problem to the form~\eqref{mono1} for our experiments.
Let $z$ be a shorthand for $(\lambda,\beta,\gamma)$
and define
\begin{align*}
\mathcal{L}(z)\triangleq
\lambda(\delta - \kappa) +
\frac{1}{m}\sum_{i=1}^m\Psi(\langle \hat{x}_i,\beta\rangle)
+
\frac{1}{m}
\sum_{i=1}^m
\gamma_i(
\hat{y}_i\langle\hat{x}_i,\beta\rangle - \lambda\kappa
).
\end{align*}
The first-order necessary and sufficient conditions for the convex-concave saddlepoint problem in \eqref{drslr2} are
\newcommand\sample[1]{\langle\hat{x}_{#1},\beta\rangle}
\begin{align} \label{monon2}
0
\in
B(z)
+
A_1(z)
+
A_2(z)
\end{align}
where the vector field $B(z)$ is defined as
\begin{align}\label{defB}
B(z)
\triangleq
\left[
\begin{array}{c}
\nabla_{\lambda,\beta} \mathcal{L}(z)\\
-\nabla_{\gamma} \mathcal{L}(z)
\end{array}
\right],
\end{align}
with
\begin{align*}
\nabla_{\lambda,\beta} \mathcal{L}(z)
=
\left[
\begin{array}{c}
\delta - \kappa(1+\frac{1}{m}\sum_{i=1}^m\gamma_i)\\
\frac{1}{m}\sum_{i=1}^m\Psi'(\sample{i})\hat{x}_i
+\frac{1}{m}\sum_{i=1}^m\gamma_i\hat{y}_i\hat{x}_i
\end{array}
\right]
\end{align*}
and
\begin{align*}
\nabla_\gamma \mathcal{L}(z)
=
\left[
\begin{array}{c}
\frac{1}{m}(\hat{y}_1\sample{1}-\lambda\kappa)
\\
\vdots
\\
\frac{1}{m}(\hat{y}_m\sample{m}-\lambda\kappa)
\end{array}
\right].
\end{align*}
It is readily confirmed that $B$ defined in this manner is Lipschitz.
\col{Monotonicity of $B$ follows from the fact that it is the generalized gradient of a convex-concave saddle function \cite{rockafellar1970monotone}}.
For the set-valued operators, $A_1(z)$ corresponds to the constraints and $A_2(z)$ to the nonsmooth $\ell_1$ regularizer, and are defined as
\begin{align*}
A_1(z)
\triangleq
N_{\cC_1}(\lambda,\beta)\times N_{\cC_2}(\gamma),
\end{align*}
where
\begin{align*}
\cC_1
\triangleq
\big\{
(\lambda,\beta):
\|\beta\|_2\leq \lambda/(L_\Psi+1)
\big\}
\quad
\text{ and }
\quad
\cC_2\triangleq
\{\gamma:
\|\gamma\|_\infty\leq 1
\},
\end{align*}
and
\begin{align*}
A_2(z)
\triangleq
\{\mathbf{0}_{1\times 1}\}\times
c\partial \|\beta\|_1
\times\{\mathbf{0}_{m\times 1}\}.
\end{align*}
Here, the notation $\mathbf{0}_{p\times 1}$ denotes the $p$-dimensional vector
of all zeros. $\cC_1$ is a scaled version of the second-order cone, well known
to be a closed convex set, while $\cC_2$ is the unit ball of the $\ell_\infty$ norm,
also closed and convex.  Since $A_1$ is a normal cone map of a closed convex
set and $A_2$ is the subgradient map of a closed proper convex function (the
scaled $1$-norm), both of these operators are maximal monotone and
problem~\eqref{monon2} is a special case of~\eqref{mono1} for $n=2$.

\paragraph{Stochastic oracle implementation}
The operator $B:\rR^{m+d+1}\mapsto\rR^{m+d+1}$, defined in \eqref{defB},
can be written as
\begin{align*}
B(z)
=
\frac{1}{m}\sum_{i=1}^m B_i(z)
\end{align*}
where
\begin{align*}
B_i(z)
\triangleq
\left[
\begin{array}{c}
\delta - \kappa(1+\gamma_i)\\
\Psi'(\sample{i})\hat{x}_i
+\gamma_i\hat{y}_i\hat{x}_i
\\
\mathbf{0}_{(i-1)\times 1}
\\
-(\hat{y}_i\sample{i}-\lambda\kappa)
\\
\mathbf{0}_{(m - i)\times 1}
\end{array}
\right].
\end{align*}
In our SPS experiments, the stochastic oracle for $B$ is simply $\tilde{B}(z)
= \frac{1}{|\mathbf{B}|}\sum_{i\in \mathbf{B}} B_i(z)$ for some minibatch
$\mathbf{B}\subseteq\{1,\ldots,m\}$. We used a batchsize of $100$.

\paragraph{Resolvent computations}
The resolvent of $A_1$ is readily constructed from the projection maps of
the simple sets $\cC_1$ and $\cC_2$, while the resolvent $A_2$ involves the
proximal operator of the $\ell_1$ norm.  Specifically,
\begin{align*}
J_{ \rho A_1}(z)
=
\left[
\begin{array}{c}
\proj_{\cC_1}\!(\lambda,\beta)\\
\proj_{\cC_2}\!(\gamma)
\end{array}
\right]
\quad
\text{ and}
\quad
J_{\rho A_2}(z)
=
\left[
\begin{array}{c}
\mathbf{0}_{1\times 1}\\
\prox_{\rho c\|\cdot\|_1}\!(\beta)\\
\mathbf{0}_{m\times 1}
\end{array}
\right].
\end{align*}
The constraint $\cC_1$ is a scaled second-order cone and $\cC_2$ is the
$\ell_\infty$ ball, both of which have closed-form projections. The proximal
operator of the $\ell_1$ norm is the well-known soft-thresholding operator
\cite[Section 6.5.2]{parikh2013proximal}.  Therefore all resolvents in the
formulation may be computed quickly and accurately.

\paragraph{SPS stepsize choices}
For stepsize in SPS, we ordinarily require $\rho_k \leq\orho< 1/L$ for the
global Lipschitz constant $L$ of $B$. However, since the global Lipschitz
constant may be pessimistic, better performance can often be achieved by
experimenting with larger stepsizes.  If divergence is observed, then the
stepsize can be decreased. This type of strategy is common for SGD and similar
stochastic methods.
Thus, for SPS-decay we set
$
\alpha_k = C_d k^{-0.51}
$
and
$
\rho_k = C_d k^{-0.25},
$
and experimented with different values for $C_d$. For SPS-fixed we used $\rho=
K^{-1/4}$ and $\alpha = C_f\rho^2$, and experimented with different values for
$C_f$. The total number of iterations for SPS-fixed was chosen as follows: For
the epsilon dataset, we used $K=5000$, for SUSY we used $K=200$,  and for
real-sim we used $K=1000$.

\paragraph{Parameter choices for the other algorithms}
For Tseng's method, we used the backtracking linesearch variant with an
initial stepsize of $1$, $\theta=0.8$, and a stepsize reduction factor of
$0.7$. For FRB, we used the backtracking linesearch variant with the same
settings as for Tseng's method. For deterministic PS, we used a fixed stepsize
of $0.9/L$.

\section{Local Convergence on Non-Monotone Problems}

The work \cite{NEURIPS2020_ba9a56ce} provides a local convergence analysis for
DSEG applied to locally monotone problems.  Recall that DSEG is equivalent
to the special case of SPS for which $n=0$. While extending this result to
the more general setting of SPS is beyond the scope of this manuscript, we next provide a
preliminary sketch of how the analysis of~\cite{NEURIPS2020_ba9a56ce} might be
generalized to our setting.  We leave a formal proof to future work.

\newcommand{\Brz}{\mathbb{B}_r(z^*)}
\paragraph{Sketch of assumptions and main result}
The first assumption needed is the existence of an isolated solution
$p^*=(z^*,w_1^*,\ldots,w_{n+1}^*)\in\cS$. We then require that there exists a
ball $\mathbb{B}_r(z^*)$, centered at $z^*$, throughout which the operator $B$
is ``well-behaved'', meaning that it satisfies monotonicity and Lipschitz
continuity.  In addition, we need each $A_i$, for $i=1,\ldots,n$, to be
maximal monotone within this ball.  Outside of the ball, the operators do not
need to be monotone or Lipschitz.

Following \cite[Assumption $2'$]{NEURIPS2020_ba9a56ce}, the noise variance 
assumptions are slightly stronger than in the monotone case.   In particular,
we require that
$
\E[\|\epsilon^k\|^q|\mbF_k]\leq N^q
$
and
$
\E[\|e^k\|^q|\mbF_k]\leq N^q
$
for some $q>2$.
As before,  the noise must be zero-mean.
Finally,  the stepsize requirements are also slightly stronger than 
\eqref{stepRuleSumInf},  having the added assumption that
$\sumk\rho_k^q<\infty$.

With these assumptions, the goal is to show that, so long as the initial point
$p^1$ is sufficiently close to $p^*$, then with high probability $p^k$
converges to $p^*$.

\paragraph{Proof strategy}
The initial strategy is to develop the following recursion, satisfied by SPS, that does not (yet) utilize local monotonicity or Lipschitz continuity:
\allowdisplaybreaks
\begin{align}
 \|p^{k+1} - p^*\|^2
&\leq
(1+c_1\alpha_k^2) \|p^k - p^*\|^2
-
c_2\alpha_k\rho_k (T_k'
+ l_k
+ r_k
)
-c_3\alpha_k (r_k' + q_k)
\nonumber\\\label{nc_recursion}
&\qquad
+
c_1\alpha_k^2\big(\|e^k\|^2
+
 \|\epsilon^k\|^2
 +
 c_4
 \big)+
 c_5\alpha_k q'_k
\end{align}
for appropriate constants $c_1\ldots c_5\geq 0$.  In this inequality, we use
\begin{align*}
T_k'
&\triangleq
\frac{\tau}{\orho}\sumin\|y_i^k - w_i^k\|^2 + \frac{1}{\orho\tau}\sumin\|z^k - x_i^k\|^2,
\\
l_k &\triangleq \sum_{i=1}^{n}\langle z^* - x_i^k, w_i^* - y_i^k\rangle
+
\big\langle z^* - x_{n+1}^k, w_i^* - B (x_{n+1}^k)\big\rangle,
\\
r_k &\triangleq
\langle \epsilon^k, B(\tilde{x}^k)- w_{n+1}^k\rangle
,
\\
r_k' &\triangleq
\langle z^k - z^*,e^k\rangle,\quad
\\
q_k &\triangleq
(\rho_k^{-1}-d/2)\|\tilde{x}^k - z^k\|^2
-
\|\tilde{x}^k - z^k\|\|B(\tilde{x}^k) - B(z^k)\|
\\
q'_k 
&\triangleq
\rho_k\|\epsilon^k\|\|B x_{n+1}^k-B\tilde{x}^k\|
+
\frac{1}{2d}\|B\tilde{x}_{n+1}^k - B x_{n+1}^k\|^2,
\end{align*}
where
\begin{align}\label{defXtilde}
\tilde{x}^k &\triangleq z^k - \rho_k\big(B(z^k) - w_{n+1}^k\big) &
d&\triangleq \frac{1-\orho L}{1+\orho/2},
\end{align}
with $L$ being the local Lipschitz constant of $B$ on $\Brz$.  The iterate
$\tilde{x}^k$ is the analog of the iterate $\tilde{X}_{t+1/2}$ used in
\cite{NEURIPS2020_ba9a56ce}. 

The recursion \eqref{nc_recursion} is derived by once again starting from
\eqref{eqStart} and following the arguments leading to \eqref{eqQuasi}, but
this time not taking conditional expectations. In particular, the upper bounds
on $\|\nabla_z\varphi_k\|^2$ and $\|\nabla_{w_i}\varphi_k\|^2$ contribute the
terms $c_1\alpha_k^2\big(\|e^k\|^2
+
 \|\epsilon^k\|^2
 +c_4)
$
and
$
c_1\alpha_k^2\|p^k - p^*\|^2
$.
For $i=1,\ldots,n$, the ``$\varphi_{i,k}$-gap" term, $\varphi_{i,k}(p^k) - \varphi_{i,k}(p^*)$, 
is dealt with in a similar manner to Section \ref{secPhiiGap}, but this time not using monotonicity as in \eqref{useMono1}. This
contributes $T'_k$ and the first term in $l_k$.  
Finally, as we sketch below, the ``$\varphi_{n+1,k}$-gap" term contributes
 $r_k$, $r'_k$, $q_k$,  $q'_k$, and the last term in $l_k$.
 
For the ``$\varphi_{n+1,k}$-gap'', that is,  $\varphi_{n+1,k}(p^k) -
\varphi_{n+1,k}(p^*)$,  we have to depart from the analysis in Section
\ref{secPhin_p_1_gap} and use an alternative argument involving $\tilde{x}^k$.
We now provide some details of this argument:  in the following,  we use $Bz$
as shorthand for $B(z)$ for any vector $z\in\rR^d$. We begin the analysis with
\begin{align}
\varphi_{n+1,k}(p^k)&=
\langle z^k - x_{n+1}^k,y_{n+1}^k - w_{n+1}^k\rangle 
\nonumber\\
&=
\langle z^k - x_{n+1}^k,B x_{n+1}^k - w_{n+1}^k\rangle 
+
\underbrace{\langle z^k - x_{n+1}^k,e^k\rangle}_{\text{part of }r'_k}. \label{chain1}
\end{align}
The final term will combine with the term $\langle x_{n+1}^k - z^*,e^k\rangle$ coming from  
\begin{align}
-\varphi_{n+1,k}(p^*)
&=
\langle
z^*-x_{n+1}^k
,
w_{n+1}^*-y_{n+1}^k
\rangle 
\nonumber\\\label{bottomPhi}
&=
\langle
 z^*-x_{n+1}^k
,
w_{n+1}^*-Bx_{n+1}^k
\rangle
+
\langle
x_{n+1}^k - z^*
,
e_{n+1}^k
\rangle
\end{align}
to yield $r_k'$ above.  Equation \eqref{bottomPhi} also yields the second term
in $l_k$. Using that $\tilde{x}^k - x_{n+1}^k = \rho_k\epsilon_k$, we
rewrite the first term in~\eqref{chain1} as
\begin{align}
\langle z^k - x_{n+1}^k,B x_{n+1}^k - w_{n+1}^k\rangle 
&=
\langle z^k - \tilde{x}^k,B x_{n+1}^k - w_{n+1}^k\rangle 
+
\langle \tilde{x}^k - x_{n+1}^k,B x_{n+1}^k - w_{n+1}^k\rangle 
\nonumber\\
&=
\langle z^k - \tilde{x}^k,B x_{n+1}^k - w_{n+1}^k\rangle 
+
\rho_k\langle \epsilon^k,B x_{n+1}^k - w_{n+1}^k\rangle 
\nonumber\\
&=
\langle z^k - \tilde{x}^k,B x_{n+1}^k - w_{n+1}^k\rangle 
+
\rho_k\langle \epsilon^k,B x_{n+1}^k - B \tilde{x}^k\rangle 
\label{chain2}\\
&\qquad+
\rho_k\underbrace{\langle \epsilon^k,B \tilde{x}^k - w_{n+1}^k\rangle}_{r_k}.
\nonumber
\end{align}
Next, the terms in~\eqref{chain2}
admit the lower bound
\begin{multline*}
\langle z^k - \tilde{x}^k,B x_{n+1}^k - w_{n+1}^k\rangle 
+
\rho_k\langle \epsilon^k,B x_{n+1}^k-B\tilde{x}^k\rangle 
\\
\geq 
\langle z^k - \tilde{x}^k,B x_{n+1}^k - w_{n+1}^k\rangle 
-
\underbrace{\rho_k\|\epsilon^k\|\|B x_{n+1}^k-B\tilde{x}^k\|}_{\text{first part of }q'_k}.
\end{multline*}
Considering the first term on right-hand side of this bound, we also have
\begin{align*}
\langle z^k - \tilde{x}^k,B x_{n+1}^k - w_{n+1}^k\rangle 
&=
\langle z^k - \tilde{x}^k,B \tilde{x}^k - w_{n+1}^k\rangle 
+
\langle z^k - \tilde{x}^k,Bx_{n+1}^k - B \tilde{x}^k\rangle 
\\
&\geq 
\langle z^k - \tilde{x}^k,B \tilde{x}^k - w_{n+1}^k\rangle 
- 
\frac{d}{2}\|z^k - \tilde{x}^k\|^2
-
\underbrace{\frac{1}{2d}\|B\tilde{x}^k - B x_{n+1}^k\|^2}_{\text{second part of }q'_k}
\end{align*}
for any $d>0$, using Young's inequality. 
Finally, for the first two terms of the right-hand side of the above relation,
we may write
\begin{multline*}
\langle z^k - \tilde{x}^k,B \tilde{x}^k - w_{n+1}^k\rangle 
- 
\frac{d}{2}\|z^k - \tilde{x}^k\|^2
\\
=
\langle z^k - \tilde{x}^k,B z^k - w_{n+1}^k\rangle 
+
\langle z^k - \tilde{x}^k, B\tilde{x}^k - B z^k\rangle 
- 
\frac{d}{2}\|z^k - \tilde{x}^k\|^2
\\
\geq 
\underbrace{(\rho^{-1}_k-d/2)\|z^k - \tilde{x}^k\|^2
-
\|z^k - \tilde{x}^k\|\|B\tilde{x}^k - B z^k\|}
_{q_k},
\end{multline*}
where in the final inequality we use the Cauchy-Schwartz inequality and
substitute $Bz^k - w_{n+1}^k = \rho_k^{-1}(z^k - \tilde{x}^k)$, from the
definition of $\tilde{x}^k$ in \eqref{defXtilde}.  We have now accounted for
all the terms appearing in~\eqref{nc_recursion}.

The recursion \eqref{nc_recursion}  is analogous to equation (F.7) on page 24
of \cite{NEURIPS2020_ba9a56ce} and provides the starting point for the local
convergence analysis. The next step would be to derive an analog of Theorem
F.1.~of \cite{NEURIPS2020_ba9a56ce} using~\eqref{nc_recursion}. 
The following translation to the notation of Theorem F.1.~could be used (note that \cite{NEURIPS2020_ba9a56ce} uses $t$ for iteration counter):
\begin{align*}
 D_k &= \|p^k - p^*\|^2,\\
\zeta_k &= c_2\alpha_k\rho_k (T'_k+l_k)+c_3\alpha_k q_k,\\
\xi_k &= -c_2\alpha_k\rho_k r_k-c_3\alpha_k r_k',\\
\chi_k &=
c_1\alpha_k^2\big(\|e^k\|^2
+
 \|\epsilon^k\|^2
+\|p^k-p^*\|^2 
+
c_4 
 \big)
 +c_5\alpha_k q'_k
 ,
\end{align*}
and the event $E_\infty^\rho$ is translated to
\begin{align*}
E_\infty^\rho
=
\left\{
x_{n+1}^k\in\Brz,
\tilde{x}^k\in\mathbb{B}_{\rho r}(z^*),
p^k\in\mathbb{B}_{\rho r}(p^*)
\text{ for all }k=1,2,\ldots
\right\}.
\end{align*} 
An analog of
Theorem 2 of \cite{NEURIPS2020_ba9a56ce} could then be developed based on this result.


\end{document}